\newtheorem{Thm}{Theorem}[section]
\newtheorem{Lem}[Thm]{Lemma}
\newtheorem{Prop}[Thm]{Proposition}
\theoremstyle{definition}
\theoremstyle{remark}
\newtheorem{remark}{Remark}
\newcommand{\D}{\displaystyle}
\newcommand{\DD}{\mathbb D}
\newcommand{\C}{\mathbb C}
\newcommand{\Tt}{\mathbb T}
\def\ldots{\mathinner{\ldotp\ldotp\ldotp}}
\def\ldots{\mathinner{\cdotp\cdotp\cdotp}}
\DeclareMathOperator{\mon}{mon}
\numberwithin{equation}{section}
\begin{document}

\title{Hypercontractivity of the Bohnenblust-Hille inequality for polynomials and  multidimensional Bohr radii}
\author{Andreas Defant}
\address{Institut of mathematics, Carl von Ossietzky University, D--$26111$, Oldenburg,
Germany} \email{defant@mathematik.uni-oldenburg.de}

\author{Leonhard Frerick}
\address{Fachbereich IV - Mathematik, Universit\"{a}t Trier, D-54294 Trier}
\email{frerick@uni-trier.de}

\begin{abstract}
In 1931 Bohnenblust and Hille proved  that for each  m-homogeneous polynomial
$\sum_{|\alpha| = m} a_\alpha z^\alpha$ on $\C^n$ the $\ell^{\frac{2m}{m+1}}$-norm of its coefficients
is bounded from above by  a constant $C_m$ (depending only on the degree $m$) times the sup norm of the polynomial on the
polydisc $\DD^n$. We prove that this inequality is hypercontractive in the sense that the optimal constant $C_m$
is  $\leq C^m$ where $C \geq 1$ is an absolute constant.
>From this we derive that the Bohr radius $K_n$ of the $n$-dimensional polydisc in
$\mathbb{C}^n$ is up to an absolute constant $\geq \sqrt{\log n/n}$; this result was independently
and with a differnt proof discovered by Ortega-Cerd{\`a}, Ouna\"ies and Seip in \cite{OrOuSe}.
An alternative approach  even allows to
prove  that the Bohr radius $K_n^p$,  $1 \leq p \leq \infty $ of the unit ball of
$\ell_n^p \,,$ is asymptotically $ \geq  ( \log n/n
) ^{1-1/ \min (p,2)}$. This shows that the upper bounds for $K_n^p$ given by Boas and Khavinson from \cite{BoaKha} are optimal.
\end{abstract}

\maketitle \markboth{A.DEFANT AND L.FRERICK}{HYPERCONTRACTIVITY AND BOHR RADII}

\section{Introduction and main results}

In 1930 Littlewood proved the following (innocent looking) inequality
which is nowadays often cited as Littlewood's 4/3-inequality: For every bilinear form $A :
\C^n \times  \C^n \rightarrow \mathbb{C}$  we have
\begin{equation*} \label{little}
\bigg( \sum_{i,j} | A(e_{i}, e_{j}) |^{4/3} \bigg)^{3/4} \leq \sqrt{2} \sup_{x,y \in \DD^n}|A(x,y)|\,,
\end{equation*}
and  the exponent $4/3$ is optimal; here as usual  $\DD$ denotes the open unit disc in $\C$.
It seems  that Bohnenblust and Hille in 1931 immediately realized the importance of this results (and the techniques used in its
proof) for the study of lower bounds for the maximal width  $T$ of the strip of uniform but
non-absolute convergence of Dirichlet series $\sum a_n 1/n^s$. Bohr in 1913 in his article \cite{Bo} had shown that $T \leq 1/2$, and the
in the years following the question whether this estimate was optimal or not became well known under the name ``Bohr's  absolute convergence problem''.
Closing a long story  Bohnenblust-Hille   in their ingenious article \cite{Bohnenblust-Hille} proved  that
 in fact $T=1/2$.

The crucial  step in their solution is formed by an $m$-linear version of Littlewood's result together with its symmetrization for polynomials:
For each $m$ there is a constant $C_m \geq 1$
such that for each $n$ and for
each $m$-linear mapping $A : \C^n \times \cdots \times \C^n \rightarrow
\mathbb{C}$ we have
\begin{equation} \label{eq:Bohn-Hille}
\bigg( \sum_{i_1, \ldots,i_m } |A(e_{i_{1}}, \dots   ,
e_{i_{m}})|^{\frac{2m}{m+1}} \bigg)^{\frac{m+1}{2m}} \leq C_m \sup_{x_i \in \DD^n}|A(x_1, \dots, x_m)|\,,
\end{equation}
and  again the exponent $\frac{2m}{m+1}$
is optimal. Moreover, if $C_m$ stands for the best constant, then
the original  proof gives that  $C_m \leq m^{(m+1)/(2m)} 2^{(m-1)/2}$.
This inequality was forgotten
for long time and re-discovered by Davie \cite{Da73} and Kaijser \cite{Ka78}, see also \cite{Bl};
their proofs are (slightly) different from the original one and give the better constant
\begin{equation} \label{constant}
C_m \leq \sqrt{2}^{m-1}\,.
\end{equation}

In order to solve Bohr's ``absolute convergence problem'' Bohnenblust and Hille in fact needed a symmetric version of  \eqref{eq:Bohn-Hille}.
They used (or better invented) polarization and deduced from \eqref{eq:Bohn-Hille} that for each $m$ there is some constant $D_m\geq 1$
such that for each $n$ and for each m-homogeneous polynomial $\sum_{|\alpha| = m} a_\alpha z^\alpha$
on $\C^n$
\begin{equation} \label{poly}
\big( \sum_{|\alpha| = m} |a_\alpha|^{\frac{2m}{m+1}} \big)^{\frac{m+1}{2m}} \leq   D_m
 \sup_{z \in \DD^n} |\sum_{|\alpha| = m} a_\alpha z^\alpha|\,;
\end{equation}
and again they showed through a  highly non trivial argument that the
exponent $\frac{2m}{m+1}$ can not be improved. A nowadays standard argument
allows to deduce from \eqref{constant} and an estimate for the
polarization constant of $\ell_\infty$ due to
 Harris \cite{Ha75} that
\begin{equation*} \label{constantsym}
D_m \leq( \sqrt{2} )^{m-1}  \frac{m^{m/2}(m+1)^{\frac{m+1}{2}}}{2^{m} (m!)^{\frac{m+1}{2m}}}
\end{equation*}
(see e,g. \cite[Section 4]{DeSe}), and using  Sawa's Khinchine type inequality for Steinhaus variables
Queff{\'e}lec  in \cite[Theorem III-1]{Queffelec} gets the slightly better
estimate :
\[
D_m \leq \big( \frac{2}{\sqrt{\pi}} \big)^{m-1}  \frac{m^{m/2}(m+1)^{\frac{m+1}{2}}}{2^{m} (m!)^{\frac{m+1}{2m}}}.
\]
Our first main result is  the following substantial improvement. We show that the Bohnenblust-Hille inequality \eqref{poly}
for polynomials in fact is hypercontractive in the sense that its best constant $D_m$ for some absolute constant
$C \geq 1$ satisfies $D_m \leq C^m$.
\begin{Thm} \label{mainresult}
There is a $ C \geq 1$ such that for all $ m,n  $
\[
\big( \sum_{|\alpha| = m} |a_\alpha|^{\frac{2m}{m+1}} \big)^{\frac{m+1}{2m}} \leq C^m \sup_{z \in \DD^n} |\sum_{|\alpha| = m} a_\alpha z^\alpha|\,,
\]
where $\sum_{|\alpha| = m} a_\alpha z^\alpha$ is an m-homogeneous polynomial on $\C^n$.
\end{Thm}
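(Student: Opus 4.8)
The engine is the multilinear Bohnenblust--Hille inequality \eqref{eq:Bohn-Hille}--\eqref{constant}, which is already dimension--free with constant $(\sqrt 2)^{m-1}$; the whole difficulty lies in passing from $m$-linear forms to $m$-homogeneous polynomials cheaply. The textbook route fails badly: writing $P=\sum_{|\alpha|=m}a_\alpha z^\alpha$ as $P(z)=L(z,\dots,z)$ with $L$ the symmetric $m$-linear form and $L(e_{i_1},\dots,e_{i_m})=c_{[\mathbf i]}$, one has $a_\alpha=\tfrac{m!}{\alpha!}\,c_\alpha$, so \eqref{constant} applied to $L$ together with Harris' polarization estimate $\|L\|_\infty\le\tfrac{m^m}{m!}\|P\|_\infty$ only gives $\big(\sum_\alpha|a_\alpha|^{\frac{2m}{m+1}}\big)^{\frac{m+1}{2m}}\le(\sqrt 2)^{m-1}\tfrac{m^m}{m!}(m!)^{\frac{m-1}{2m}}\|P\|_\infty$. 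Here $\tfrac{m^m}{m!}\sim e^m$ is harmless, but the factor $(m!)^{\frac{m-1}{2m}}\sim(m/e)^{m/2}$, which comes purely from the multiplicities $\tfrac{m!}{\alpha!}$, is fatal. The point of the proof is to reorganise the reduction so that these multiplicities are never charged more than $C^m$.

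I would argue by induction on the degree $m$, splitting off one unit of degree (or half of it) at a time. Associate to $P$ the bi-homogeneous polynomial $\widetilde P$ on $\C^n\times\C^n$ of bidegree $(1,m-1)$ with $\widetilde P(z,z)=P(z)$ and symmetrised coefficients, i.e.\ $\widetilde P(z,w)=\tfrac1m\sum_{i=1}^n z_i\,\partial_i P(w)$. From the Cauchy--type identity $m\,\widetilde P(z,w)=\int_{\mathbb T}\lambda^{-(m-1)}P(z+\lambda w)\,\tfrac{d\lambda}{2\pi}$, after dilating the two groups of variables by the \emph{optimal} radii (ratio $1:(m-1)$), one obtains the \emph{bounded} polarization estimate $\|\widetilde P\|_\infty\le\big(\tfrac{m}{m-1}\big)^{m-1}\|P\|_\infty\le e\,\|P\|_\infty$; it is this cheapness of bi-homogeneous polarization --- as opposed to the $\tfrac{m^m}{m!}$ of the full one --- that makes an exponential bound conceivable. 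One then applies the degree--$1$ inequality in the $z$-variables and the inductive hypothesis (degree $m-1$) in the $w$-variables, in the vector--valued form with values in a fixed $\ell_q$-space of cotype $2$ (which costs only an absolute constant per use of Khinchine, hence $C^m$ in all), to bound a mixed $\ell_1$--$\ell_{\frac{2(m-1)}{m}}$ norm of the coefficient family of $\widetilde P$ by $\|\widetilde P\|_\infty$. A parallel scheme with the balanced split $(\lfloor m/2\rfloor,\lceil m/2\rceil)$, whose optimised polarization constant is only $O(\sqrt m)$, yields instead a recursion $D_m\le\mathrm{poly}(m)\,D_{\lfloor m/2\rfloor}D_{\lceil m/2\rceil}$.

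The step I expect to be the real obstacle is the final re-summation $a_\alpha=\sum_{i:\,\alpha_i\ge1}c_{e_i,\alpha-e_i}$ from bi-homogeneous coefficients back to monomial coefficients: estimated crudely (Hölder over the at most $m$ nonzero entries of $\alpha$) it costs a factor $\sqrt m$ per step, hence a super--exponential $\sqrt{m!}$ after $m$ steps, and an analogous over--counting spoils the balanced scheme as well. To kill it I would interpose a hypercontractive Khinchine--type inequality for Steinhaus variables of Bonami/Bayart type, $\|Q\|_{L_2(\mathbb T^n)}\le C_0^{\deg Q}\|Q\|_{L_1(\mathbb T^n)}$ for homogeneous $Q$. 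Since polarization is \emph{free} in the $L_2$-norm --- $\|L\|_{L_2((\mathbb T^n)^m)}\le\|P\|_{L_2(\mathbb T^n)}$ --- one can carry out the symmetrisation and the collapse of the mixed norms at the level of $L_2$, use hypercontractivity to exchange the resulting $L_2$-quantities for $L_1\le L_\infty$ norms of the relevant sub-polynomials, and so confine the combinatorial loss to a single geometric factor $C^m$. Combined with the dimension--freeness of \eqref{constant}, this should give a recursion $D_m\le C\,D_{m-1}$ with $C$ absolute, hence $D_m\le C^m$, which is the assertion.
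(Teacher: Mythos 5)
Your instinct to center the argument on Bonami's hypercontractive inequality $\|Q\|_{L_2(\Tt^n)}\le \sqrt 2^{\,\deg Q}\|Q\|_{L_1(\Tt^n)}$, and the observation that comparing $L_2$ with $L_1$ avoids the $m^m/m!$-type polarization loss, is exactly the engine of the paper's proof. But the scheme you build around it --- an induction $D_m\le C\,D_{m-1}$ (or $D_m\le\mathrm{poly}(m)\,D_{\lfloor m/2\rfloor}D_{\lceil m/2\rceil}$) driven by bi-homogeneous polarization and a vector-valued Bohnenblust--Hille inequality --- is not what the paper does, and as you yourself concede, you have not closed it: the crucial last paragraph, where hypercontractivity is supposed to ``confine the combinatorial loss to a single geometric factor $C^m$,'' remains a wish rather than an argument. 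A recursion on the degree forces you to change exponents $\frac{2(m-1)}{m}\to\frac{2m}{m+1}$ at every step and to control the overlap in the re-summation $a_\alpha$ from bi-coefficients; nothing in your sketch actually carries out that bookkeeping.

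The ingredient you are missing is Blei's mixed-norm inequality (from [Bl], on fractional Cartesian products): for every family $(c_i)_{i\in M(m,n)}$ of scalars,
\[
\Big(\sum_{i\in M(m,n)}|c_i|^{\frac{2m}{m+1}}\Big)^{\frac{m+1}{2m}}
\;\le\;\prod_{k=1}^{m}\Big(\sum_{i_k=1}^{n}\Big(\sum_{i^{k}\in M(m-1,n)}|c_i|^{2}\Big)^{1/2}\Big)^{1/m}.
\]
With this in hand the paper needs no induction at all. Bonami plus the fact that $L_\infty(\mu^n)\hookrightarrow L_1(\mu^n)$ is $1$-summing gives $\pi_1\big(T:\mathcal P(^m\ell_\infty^n)\to\ell_2\big)\le\sqrt 2^{\,m}$ for the coefficient map $T$; the tensor characterization $\pi_1(T)=\sup_N\|\mathrm{id}\otimes T:\ell_1^N\otimes_\varepsilon X\to\ell_1^N(\ell_2)\|$ then yields, for each of the $m$ coordinate slots, a mixed $\ell_1(\ell_2)$ bound on the symmetrized coefficients with only an extra $\sqrt{m+1}$ from multiplicity comparisons $|(i_0,i)|/|i|\le m+1$; and Blei's inequality interpolates these $m$ mixed bounds directly into the $\ell_{2m/(m+1)}$-norm. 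The final pass from the full index set $M(m,n)$ back to $J(m,n)$, and from $\varepsilon$ to $\varepsilon_s$, costs $\sqrt m\cdot m^m/m!\le C^m$. So the structure is: Bonami $\Rightarrow$ $\pi_1$-estimate $\Rightarrow$ $m$ mixed $\ell_1(\ell_2)$ bounds $\Rightarrow$ Blei $\Rightarrow$ done, with no recursion on the exponent. Your ``peel off one coordinate'' step corresponds to a single application of the $\pi_1$-tensor formula, not to an inductive descent, and without Blei (or an equivalent interpolation of mixed norms) your recursion does not terminate with a geometric bound.
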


Let us indicate that this result (see section 3 for the proof) has some far reaching consequences.
Given an $n$-dimensional Banach space  $X_n = (\C^n, \|\cdot\|)$ for which
the $e_k$'s form a $1$-unconditional basis, we use this result to  estimate
$n$-dimensional Bohr radii of the open unit ball $B_{X_n}$ in $X_n$, and to estimate
unconditional basis constant $\chi_{\mon}(\mathcal{P}(^mX_n))$
of the monomials $z^\alpha$ in the Banach space
$\mathcal{P}(^mX_n))$ of all $m$-homogeneous
polynomials.

Recall that the Bohr radius $K(B_{X_n})$ of the open unit ball $B_{X_n}$ (a Reinhardt domain)
is the infimum of all $r \geq 0$ such that for each
holomorphic function $f = \sum_\alpha a_\alpha z^\alpha$ on $B_{X_n}$ we have
\[
\sup_{z \in r B_{X_n}} \sum_\alpha |a_\alpha z^\alpha| \leq
\sup_{z \in B_{X_n}}|\sum_\alpha a_\alpha z^\alpha|\,.
\]
The unconditional basis constant $\chi_{\mon}(\mathcal{P}(^mX_n))$
of the monomials $z^\alpha$ in
$\mathcal{P}(^mX_n))$ by definition is the best constant $C \geq 1$ such that for every
$m$-homgeneous polynomial $\sum_{|\alpha| = m} a_\alpha z^\alpha$ on $\C^n$ and any choice of scalars $\varepsilon_\alpha$ with
$|\varepsilon_\alpha| \leq 1$ we have
\[
\sup_{z \in B_{X_n}} |\sum_{|\alpha| = m} \varepsilon_\alpha a_\alpha z^\alpha|
\leq C \sup_{z \in B_{X_n}} |\sum_{|\alpha| = m} a_\alpha z^\alpha|\,.
\]
Asymptotic estimates for unconditional basis constants of spaces of $m$-homogeneous polynomials  on $X_n = \ell_p^n$
were given
in \cite[Theorem 3]{DeDiGaMa}; as usual $\ell^n_p$, $1\le p\le \infty$ and
$n\in \mathbb{N}$, stands for $\C^n$ together with the $p$-norm $\|z\|_p :=
(\sum_k |z_k|^p)^{1/p}$ (with the obvious modification for $p =
\infty$). These results were improved in \cite[Lemma3.1]{DeFr} where it is shown that
$$\chi_{\mon}(\mathcal{P}(^m\ell^n_p)) \leq C^m n^{(m-1)( 1-\frac{1}{\min \{p,2\} })} \,,$$
$C\geq 1$ some absolute constant. Our second main result is:

\begin{Thm} \label{mainII}
There is a constant $C \geq 1$ such that for each $1 \leq p \leq \infty$
and all $m,n$
\[
\chi_{\mon}(\mathcal{P}(^m \ell^p_n)) \leq C^m
\big(1+\frac{n}{m} \big)^{(m-1)( 1-\frac{1}{\min \{p,2\} })}.
\]
\end{Thm}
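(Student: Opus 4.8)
The plan is to bound, for every $m$-homogeneous polynomial $P=\sum_{|\alpha|=m}a_\alpha z^\alpha$ on $\C^n$ and every $z$ in the unit ball $B_{\ell^p_n}$, the quantity $\sum_{|\alpha|=m}|a_\alpha z^\alpha|$ by $C^m(1+n/m)^{(m-1)(1-1/\min\{p,2\})}\|P\|_{\infty,B_{\ell^p_n}}$; since $\chi_{\mon}(\mathcal P(^m\ell^p_n))$ equals the supremum over all such $P$ (normalised by $\|P\|_{\infty,B_{\ell^p_n}}=1$) of $\sup_{z\in B_{\ell^p_n}}\sum_{|\alpha|=m}|a_\alpha z^\alpha|$, this gives the theorem. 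Replacing $z$ by $(|z_k|)_k$ we may assume $z_k\ge 0$, and since the terms with $\operatorname{supp}\alpha\not\subseteq\operatorname{supp}z$ vanish we may restrict the sum to $\alpha$ with $\operatorname{supp}\alpha\subseteq\operatorname{supp}z$; the number of such indices is at most $\binom{n+m-1}{m}\le\bigl(e(1+n/m)\bigr)^m$.

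The core step is to transfer Theorem \ref{mainresult} from the polydisc to $B_{\ell^p_n}$. For fixed $z\in B_{\ell^p_n}$ consider the $m$-homogeneous polynomial $Q(w):=P(z_1w_1,\dots,z_nw_n)$, whose $\alpha$-th coefficient is $a_\alpha z^\alpha$. Because $\{\zeta\in\C^n:|\zeta_k|\le z_k\ \text{for all }k\}\subseteq\overline{B_{\ell^p_n}}$, we have $\|Q\|_{\infty,\DD^n}=\sup\{|P(\zeta)|:|\zeta_k|\le z_k\}\le\|P\|_{\infty,B_{\ell^p_n}}$, so Theorem \ref{mainresult} applied to $Q$ yields
\[
\Bigl(\sum_{|\alpha|=m}\bigl(|a_\alpha|z^\alpha\bigr)^{\frac{2m}{m+1}}\Bigr)^{\frac{m+1}{2m}}\le C^m\,\|P\|_{\infty,B_{\ell^p_n}}\qquad\bigl(z\in B_{\ell^p_n}\bigr).
\]
For $2\le p\le\infty$ we now apply H\"older's inequality with the conjugate exponents $\tfrac{2m}{m+1}$ and $\tfrac{2m}{m-1}$ against the vector of ones indexed by $\{\alpha:|\alpha|=m,\ \operatorname{supp}\alpha\subseteq\operatorname{supp}z\}$, obtaining $\sum_{|\alpha|=m}|a_\alpha|z^\alpha\le C^m\|P\|_{\infty,B_{\ell^p_n}}\binom{n+m-1}{m}^{\frac{m-1}{2m}}\le\bigl(Ce^{1/2}\bigr)^m(1+n/m)^{\frac{m-1}{2}}\|P\|_{\infty,B_{\ell^p_n}}$. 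Since $\min\{p,2\}=2$, this is the assertion for $2\le p\le\infty$ (and in particular for $p=\infty$, where it already improves the bound of \cite{DeFr} from $n^{(m-1)/2}$ to $(1+n/m)^{(m-1)/2}$).

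For $p=1$ no Bohnenblust--Hille input is needed: writing $a_\alpha=\binom{m}{\alpha}c_\alpha$, where $(c_\alpha)$ are the coefficients of the symmetric $m$-linear form $\check P$ associated with $P$, and using $\sum_{|\alpha|=m}\binom{m}{\alpha}|z^\alpha|=\bigl(\sum_k|z_k|\bigr)^m\le 1$ for $z\in B_{\ell^1_n}$, we get $\sum_{|\alpha|=m}|a_\alpha z^\alpha|\le\max_\alpha|c_\alpha|\le\|\check P\|_{(B_{\ell^1_n})^m}\le\tfrac{m^m}{m!}\,\|P\|_{\infty,B_{\ell^1_n}}\le e^m\|P\|_{\infty,B_{\ell^1_n}}$, the second inequality because each standard unit vector lies in $\overline{B_{\ell^1_n}}$ and the third by the polarization inequality. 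Hence $\chi_{\mon}(\mathcal P(^m\ell^1_n))\le e^m$, which settles $p=1$.

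Finally, the range $1<p<2$ follows by complex interpolation between the endpoints $p=1$ and $p=2$ just obtained: one has $[\ell^1_n,\ell^2_n]_\theta=\ell^p_n$ isometrically with $1/p=1-\theta/2$, and, as in \cite{DeFr}, the monomial unconditionality constant of $\mathcal P(^m\,\cdot\,)$ behaves submultiplicatively under this interpolation up to a factor $c^m$ with $c$ an absolute constant; with $\theta=2(1-1/p)$ this gives $\chi_{\mon}(\mathcal P(^m\ell^p_n))\le c^m\,(e^m)^{1-\theta}\bigl((Ce^{1/2})^m(1+n/m)^{(m-1)/2}\bigr)^{\theta}\le (C')^m(1+n/m)^{(m-1)(1-1/p)}$. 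Adjusting the absolute constant to cover the three ranges $p=1$, $1<p<2$, $2\le p\le\infty$ simultaneously completes the proof. The step I expect to demand the most care is this last one: one must render the interpolation of the spaces $\mathcal P(^m\ell^p_n)$ (equivalently, of the symmetric projective tensor products of $\ell^p_n$) precise enough that the transition constant stays of the form $c^m$. Note also that it is exactly at the $p=2$ (and $p=\infty$) endpoints feeding into this interpolation that the \emph{hypercontractive} bound $D_m\le C^m$ of Theorem \ref{mainresult} is indispensable: the classical estimates for $D_m$ carry an $m^{m/2}$-type factor, which would survive the interpolation and destroy the asserted exponential-in-$m$ growth.
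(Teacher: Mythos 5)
Your treatment of the two endpoints is fine. For $2\le p\le\infty$, the transfer trick (replacing $P$ by $Q(w)=P(z_1w_1,\dots,z_nw_n)$, observing that $\|Q\|_{\infty,\DD^n}\le\|P\|_{\infty,B_{\ell^p_n}}$, applying Theorem \ref{mainresult}, and then H\"older with exponents $\tfrac{2m}{m+1},\tfrac{2m}{m-1}$ together with the dimension bound \eqref{dimension}) is correct and is essentially the content of Remarks~(1)--(2) following Theorem \ref{mainII}. The $p=1$ argument via $a_\alpha=\binom{m}{\alpha}c_\alpha$, the multinomial theorem and polarization is also correct and gives $\chi_{\mon}(\mathcal P(^m\ell^1_n))\le m^m/m!\le e^m$.

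The interpolation step for $1<p<2$, which you yourself flag as the delicate one, is a genuine gap, and it is also not the route the paper takes. To run your argument you must know that the unconditionality constant passes through the couple $\bigl(\mathcal P(^m\ell^1_n),\mathcal P(^m\ell^2_n)\bigr)$ with a loss of only $c^m$; concretely, after interpolating the multiplier operators $T_\varepsilon$, you still need the Banach--Mazur distance between the complex interpolation space $\bigl[\mathcal P(^m\ell^1_n),\mathcal P(^m\ell^2_n)\bigr]_\theta$ and $\mathcal P(^m\ell^p_n)$ to be $\le c^m$ uniformly in $n$. One inclusion is easy (a three-lines argument with the analytic family $s\mapsto\bigl(r_k^{\,p(1-s/2)}\bigr)_k$ shows $\|P\|_{\mathcal P(^m\ell^p_n)}\le\|P\|_\theta$), but the reverse inclusion is exactly what is needed and is not established. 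Note that the cruder inequality $\|P\|_{\mathcal P(^m\ell^1_n)}^{\,1-\theta}\|P\|_{\mathcal P(^m\ell^2_n)}^{\,\theta}\le c^m\|P\|_{\mathcal P(^m\ell^p_n)}$, which would suffice, fails: already for $m=1$ it reduces to $\|a\|_\infty^{1-\theta}\|a\|_2^{\theta}\le c\,\|a\|_{2/\theta}$, which is off by unbounded (logarithmic in $n$) factors for $a_k=k^{-1/2}$. Interpolating $\varepsilon_s$-tensor powers of $\ell^q_n$ with a loss of only $c^m$ is a genuinely delicate question, and the appeal to \cite{DeFr} does not support it: that paper, like the present one, argues through local Banach-space invariants rather than interpolation.

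The paper covers the full range $1\le p\le\infty$ by one uniform chain, with no interpolation at all. Lemma \ref{absolut} (the hypercontractive $\pi_1$-estimate) feeds into Proposition \ref{Gordon-Lewis-Equi}, giving $\chi_{\mon}(\mathcal P(^m\ell^p_n))\le C^m\,\mathrm{gl}(\mathcal P(^m\ell^p_n))$; Proposition \ref{golproj} lowers the degree and converts the Gordon--Lewis constant into a projection constant, $\mathrm{gl}(\otimes^{m,s}_{\varepsilon_s}X)\le 2(m^m/m!)^2\,\lambda(\otimes^{m-1,s}_{\varepsilon_s}X)$; and Proposition \ref{proproj} bounds $\lambda(\mathcal P(^{m-1}\ell^p_n))\le C^m(1+n/m)^{(m-1)(1-1/\min\{p,2\})}$ directly, by constructing a projection from the coefficient functionals and using the estimate $d_\alpha\le e^{m/p}(m!/\alpha!)^{1/p}$ from \cite[Lemma 3.3]{DeFr}. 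To make your proof complete you would either have to supply the missing interpolation estimate, or replace the interpolation step by this Gordon--Lewis/projection-constant route.
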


During the preparation of this manuscript we were informed that for $p=\infty$ and $n > m^2 >1$
this result has been obtained independently and with a substantially different proof
by Ortega-Cerd{\`a}, Ouna\"ies and Seip in their very recent article
\cite[Theorem 1]{OrOuSe}. There it is
presented as an upper estimate of the Sidon constant for the index set of nonzero $m$-homogeneous
polynomials in $n$ complex variables (see also \eqref{Sidon} and \eqref{lower} below for equivalent formulations).
Several {\bf remarks on Theorem \ref{mainresult}} follow:

\vspace{2mm}

\noindent{\bf (1)} Let us first
indicate how for  $p=\infty$ the preceding theorem can be deduced as an immediate consequence
of the hypercontractivity of the constant in Theorem \ref{mainresult}: Clearly we have
$$  \chi_{\mon}(\mathcal{P}(^m \ell_\infty^n)) =
\sup \{  \sum_{|\alpha| = m} |a_\alpha| : \sup_{z \in \DD^n} |\sum_{|\alpha| = m} a_\alpha z^\alpha  | \leq 1 \}\,, $$
hence by H\"older's
inequality for each polynomial $\sum_{|\alpha| = m} a_\alpha z^\alpha$
$$
\sum_{|\alpha| = m} |a_\alpha| \leq \big(  \sum_{|\alpha|=m} 1 \big)^{\frac{m-1}{2m}} \big( \sum_{|\alpha| = m} |a_\alpha|^{\frac{2m}{m+1}} \big)^{\frac{m+1}{2m}}\,.
$$
But then  Theorem \ref{mainresult} and
a straight forward calculation using Stirling's formula (see also \eqref{dimension}) as desired show that there is a constant
 $ C \geq 1$ such that for all m-homogeneous polynomials $\sum_{|\alpha| = m} a_\alpha z^\alpha $ on $\C^n$
we have
\begin{equation} \label{Sidon}
\sum_{|\alpha| = m} |a_\alpha| \leq C^m \big(1+\frac{n}{m} \big)^{\frac{m-1}{2}}\sup_{z \in \DD^n}
|\sum_{|\alpha| = m} a_\alpha z^\alpha|\,.
\end{equation}

\vspace{2mm}

\noindent{\bf (2)} From \cite[Lemma 3.2]{DeFr} we know that there is some constant $C \geq 1$ such
for each Banach space  $X_n = (\C^n,\|\cdot\|)$  for which the  $e_k$'s form a
1-unconditional basis and each $m$,
\begin{align*}
   \chi_{\mon} (\mathcal{P}(^mX_n)) \;\le\; \chi_{\mon}(\mathcal{P}(^m\ell^n_\infty)) \,.
\end{align*}
Hence, once in Theorem \ref{mainII} the case  $p=\infty$  is proved, the case $2 \leq p$ follows.

\vspace{3mm}

\noindent{\bf (3)} Moreover, for $2 \leq p$  Theorem \ref{mainII}  is optimal in the following sense: Given $2 \leq p \leq \infty$, we have
\begin{equation} \label{lower}
\chi_{\mon}(\mathcal{P}(^m \ell^p_n))\,\,\sim \,\,
\left\{
\begin{array}{lll}
\frac{1}{\sqrt{m^{m-1}}}\sqrt{n^{m-1}} & \mbox{ if } n > m \\
\,\,1 & \mbox{ if } m \geq n\,,
\end{array}
\right.
\end{equation}
where $A_{mn}\sim B_{mn}$ means that there is some constant $C\geq 1$ such that for every $m,n$ we have
$1/C^m A_{mn} \leq B_{mn} \leq C^m A_{mn}$; indeed, this follows from an easy calculation since by a probabilistic estimate from \cite[(4.4)]{DeGarMa_BorhLoc} we know that for each such $p$ there is  some constant $d_p>0$ such that for every $m,n$
\[
 \dfrac{\sqrt{n^{m-1}}}{d_p\,\sqrt{m!}2^{\frac{3m}{2}-\frac{1}{2}} m^{\frac{3}{2}}}  \leq \chi_{\mon}(\mathcal{P}(^m \ell^p_n))\,.
\]

\vspace{2mm}

\noindent{\bf (4)} The case  $p \leq 2$ in Theorem \ref{mainII} needs a different approach of independent interest.
This approach improves ideas from \cite{DeDiGaMa},  will be given in section 6 based on the results from the sections 4 and 5,
and does still
cover the case $p \geq 2$. Invariants from local Banach space theory as  Gordon-Lewis and projection constants are involved.

\vspace{2mm}

Let us finally turn to multidimensional Bohr radii. In \cite[Theorem~2.2]{DeGarMa_BorhLoc} a basic link between
Bohr radii and unconditional basis constants is given: For every  $n$-dimensional Banach space  $X_n = (\C^n, \|\cdot\|)$ for which
the $e_k$'s form a $1$-unconditional basis we have
\begin{equation} \label{eq2.1}
  \frac{1}{3R(X_n)}\,
  \le K(B_{X_n})
  \le \min \Bigg(\frac{1}{3}, \; \frac{1}{R(X_n)}\Bigg)\,,
\end{equation}
where $R(X_n):= \sup_m \chi_{\mon}(\mathcal{P}(^mX_n))^{\frac{1}{m}}$\,.
This means that estimates for unconditional basis constants of $m$-homoge\-neous polynomials
always lead to estimates for multidimensional Bohr radii. For $n=1$ we obtain
Bohr's famous power series theorem
\begin{equation*}\label{Bohr}
K(\mathbb{D})= \frac{1}{3}\,
\end{equation*}
from  \cite{Bohr}, and hence  \eqref{eq2.1} can be seen as an abstract extension of Bohr's theorem
(let us remark that Bohr discovered his power series theorem  in the context of the above mentioned ``absolute convergence problem'').

By results of Aizenberg, Boas, Dineen, Khavinson,
Timoney and ourselves  from \cite{Aiz}, \cite{Boas}, \cite{BoaKha}, \cite{DeFr}, \cite{DinTim1}  there is a constant $C \geq 1$ such that for
all $1 \leq p \leq \infty$ and all $n$
\begin{equation}\label{Boas}
\frac{1}{C} \bigg( \frac{\log n}{n \log \log n} \bigg) ^{1-\frac{1}{\min (p,2)}}
\leq K(B_{\ell^n_{p}})
\leq C \bigg( \frac{\log n}{n} \bigg) ^{1-\frac{1}{\min
(p,2)}}\,.
\end{equation}
Our third main result is the following improvement:
\begin{Thm} \label{mainIII}
There is a constant $C > 0$ such that for each $1 \leq p \leq \infty$
and all $n$
$$ \frac{1}{C} \bigg( \frac{\log n
}{n} \bigg) ^{1-\frac{1}{\min (p,2)}} \leq K(B_{\ell^n_{p}}) \,.$$
\end{Thm}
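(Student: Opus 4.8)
The plan is to derive Theorem~\ref{mainIII} from the improved unconditionality bound of Theorem~\ref{mainII} via the general estimate \eqref{eq2.1} linking Bohr radii with unconditional basis constants. By \eqref{eq2.1} applied to $X_n = \ell^n_p$ we have $K(B_{\ell^n_p}) \ge \frac{1}{3 R(\ell^n_p)}$ with $R(\ell^n_p) = \sup_m \chi_{\mon}(\mathcal{P}(^m\ell^n_p))^{1/m}$, so everything reduces to proving the upper estimate
\[
R(\ell^n_p) \;\le\; C\,\Big(\frac{n}{\log n}\Big)^{1-\frac{1}{\min(p,2)}}
\]
for an absolute constant $C$ and all $n \ge 2$ (the case $n=1$ being trivial since $K(\DD) = 1/3$).

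The first step is to feed Theorem~\ref{mainII} into the definition of $R$. Put $\theta := 1 - \frac{1}{\min(p,2)} \in [0,\tfrac12]$. Taking $m$-th roots in $\chi_{\mon}(\mathcal{P}(^m\ell^n_p)) \le C^m (1+n/m)^{(m-1)\theta}$ turns the factor $C^m$ into an absolute constant and gives $\chi_{\mon}(\mathcal{P}(^m\ell^n_p))^{1/m} \le C\big((1+n/m)^{1-1/m}\big)^{\theta}$; since $t \mapsto t^\theta$ is increasing this reduces the whole statement to the purely one-dimensional estimate
\[
\sup_{m \ge 1}\,(1+n/m)^{1-1/m} \;\le\; C\,\frac{n}{\log n}\qquad (n \ge 2)
\]
with $C$ absolute. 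The crude bound $R(\ell^n_p) \le C(1+n)^\theta$, valid for every $n$, then lets me absorb the finitely many small $n$ into the final constant.

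The heart of the matter, and the only genuinely delicate point, is this last optimization. I would study $h(m) := \frac{m-1}{m}\log(1+n/m)$ for real $m \ge 1$ and split at $m = \log n$. For $m > \log n$ one reads off directly $h(m) \le \log(1+n/m) \le \log(2n/\log n) = \log 2 + \log n - \log\log n$ when $m \le n$, and $h(m) \le \log 2$ when $m > n$. For $1 \le m \le \log n$ one uses $\log(1+n/m) \le \log n - \log m + \frac{m}{n} \le \log n - \log m + 1$ and maximizes the resulting majorant $F(m) := (1-1/m)(\log n - \log m + 1)$; a short computation gives $F'(m) = m^{-2}(\log n - \log m + 2 - m)$, so the maximum is attained at the $m_0$ solving $\log n = m_0 - 2 + \log m_0$, where the critical relation yields $F(m_0) = m_0 - 2 + m_0^{-1} = \log n - \log m_0 + m_0^{-1}$, and the elementary inequality $m_0 \ge \tfrac12\log n$ (from $\log m_0 \le m_0$) forces $F(m_0) \le \log n - \log\log n + (1+\log 2)$. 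Combining the two ranges gives $h(m) \le \log n - \log\log n + O(1)$ for all $m \ge 1$ and all large $n$, i.e. $(1+n/m)^{1-1/m} \le e^{O(1)}\, n/\log n$. Plugging this back through the reductions above produces $R(\ell^n_p) \le C (n/\log n)^\theta$ and hence $K(B_{\ell^n_p}) \ge \frac{1}{3R(\ell^n_p)} \ge \frac{1}{C'}(\log n/n)^{1-1/\min(p,2)}$, as claimed. The expected obstacle is purely the sharp analysis of the supremum over $m$: one must locate the maximizing degree $m_0 \approx \log n$ and show the resulting value is only $O(n/\log n)$ rather than the naive $O(n)$ that separate estimation of the two factors $(1-1/m)$ and $\log(1+n/m)$ would give.
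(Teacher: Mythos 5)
Your proposal is correct and follows essentially the same route as the paper: combine the lower bound in \eqref{eq2.1}, namely $K(B_{\ell^n_p}) \ge \tfrac{1}{3 R(\ell^n_p)}$, with Theorem~\ref{mainII}, and then optimize over the degree $m$. Your reductions (taking $m$-th roots to kill $C^m$, using monotonicity of $t\mapsto t^\theta$ to pull the exponent $\theta=1-1/\min(p,2)$ outside, absorbing small $n$ via the crude bound $R(\ell^n_p)\le C(1+n)^\theta$) are exactly the right moves, and your calculus analysis of $F(m)=(1-1/m)(\log n - \log m + 1)$ is sound: the critical point $m_0$ with $\log n = m_0 - 2 + \log m_0$ sits near $\log n$, and the resulting bound $\sup_m (1+n/m)^{1-1/m}\le C\,n/\log n$ is what one needs.

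The only genuine difference is how the final optimization is packaged. You differentiate $F$ and chase the critical point explicitly; the paper compresses the same computation into the remark that one should ``minimize $m n^{1/m}$ for $n>m$.'' This is a slicker way of saying the same thing: writing $(n/m)^{1-1/m} = \dfrac{n\, m^{1/m}}{m\, n^{1/m}}$ and noting $m^{1/m}=O(1)$ reduces the whole problem to bounding $m n^{1/m}$ from below, and that function of $m$ has its minimum at $m=\log n$ with value $e\log n$ (since $\log(m n^{1/m}) = \log m + \tfrac{\log n}{m}$ is convex in $m$ with critical point at $\log n$). Either way one lands on $\sup_m \approx n/\log n$; your version is more hands-on and spells out a step the paper leaves to the reader, but neither approach buys more generality than the other.
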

The proof  is an almost immediate consequence  of the basic link from \eqref{eq2.1} and
Theorem \ref{mainII}, see section 6. As pointed out above the case $p=\infty$ also follows from  \eqref{Sidon} (which  is itself an immediate consequence
of Theorem \ref{mainresult}, see above).

Let us again emphasize that in Theorem \ref{mainIII} (as in Theorem \ref{mainII}) the most important case $p=\infty$ was observed independently and through a substantially different proof by
Ortega-Cerd{\`a}, Ouna\"ies and Seip in their very recent article \cite[Theorem 2]{OrOuSe}.

\section{More preliminaries}
We use standard notation and notions from (local) Banach space theory, as
presented e.g.\ in \cite{DefFlo}, \cite{DiJaTo95}, \cite{LinTza} or \cite{Tom}.
All considered Banach spaces $X$
are assumed to be complex. We denote their open unit balls by $B_X$ and their duals by $X^*$. The Minkowski spaces
$\ell^n_p$ were already defined in the introduction.

We denote by $ {\rm gl}(X) $
the Gordon-Lewis constant of a Banach space $ X $ (see section 4 for the definition), by $ \lambda(X) $
the projection constant (see section 5 for the definition),  and by $d(X,Y)$
the Banach-Mazur distance between the Banach spaces $ X $ and $Y$.
The $1-$summing norm of a (linear and bounded) operator $T :X \to Y$ is denoted by
$\pi_1(T)$ (we recall this definition in section 3).
A Schauder basis $(x_n)$ of a Banach space $X$ is said to be unconditional
if there is a constant $c\ge 0$ such that$\|\sum^n_{k=1} |\alpha_k|\, x_k\|
\le c\| \sum^n_{k=1} \alpha_k \, x_k\|$ for all $n$ and
$\alpha_1,\ldots,\alpha_n \in \C$. In this case, the best constant $c$ is
denoted by $\chi((x_n))$ and called the unconditional basis constant of $(x_n)$.
Moreover, the infimum over all possible constants $\chi(x_n)$ is the
unconditional basis constant $\chi(X)$ of $X$.
We will often consider Banach spaces  $X = (\C^n, \|\cdot\|)$ such that the
standard unit vectors $e_k$, $1\le k\le n$ form  a 1-unconditional basis. Then the $e_k$'s
also form a  1-unconditional basis of the dual space $X^*$.

For the metric theory of tensor products  we refer to \cite{DefFlo}, and for
the metric theory of symmetric
tensor products and spaces of polynomials to \cite{Dineen} and
\cite{Floret}.
If $X = (\C^n, \|\cdot\|)$ is a
Banach space and $m\in \mathbb{N}$, then $\mathcal{P}(^mX)$ stands for the Banach
space of all $m$-homogeneous polynomials $p(z) = \sum_{|\alpha| = m}
c_\alpha \, z^\alpha,\, z \in \C^n$, together with the norm
$\|p\|_{\mathcal{P}(^mX)} := \sup_{\|z\| \le 1} |p(z)|$. The unconditional basis
constant of all monomials $z^\alpha$, $|\alpha| = m$, is denoted by
$\chi_{\mon}(\mathcal{P}(^mX))$. We identify $ \mathcal{P}(^mX) $ with the space $ \mathcal{L}_s(^m X) $
of symmetric m-linear forms, which is a subspace of $ \mathcal{L}(^m X) $, the space of
m-linear forms. From the polarization formula we get
\[
 \|p\|_{\mathcal{P}(^mX)} \leq
\|p\|_{\mathcal{L}_s(^mX)} \leq \frac{m^m}{m!}\|p\|_{\mathcal{P}(^mX)}.
\]

Sometimes it will be more convenient to think in terms of (symmetric) tensor products
instead of spaces of polynomials.
For a vector space $X$ we denote by
$\otimes^m X$ the $m$th full tensor product, and by $\otimes^{m,s} X$
the  $m$th symmetric tensor product.
Recall
that   $\otimes^{m,s} X$ can be identified with the image of the
symmetrization operator
$$
\begin{array}{lccc}
S_m:&\otimes^m X&{\longrightarrow} &\otimes^{m} X
\\&y_1\otimes ...\otimes y_m & \mapsto &
\frac{1}{m!}\sum_{\sigma \in {\Pi}_m}y_{\sigma (1)}\otimes
....\otimes y_{\sigma(m)},
\end{array} $$
where $\Pi_m$ stands for the group of permutations of
$\{1,...,m\}$; note that the symmetrization operator
in fact is a projector. We will often use the fact that there is some
absolute constant $C \geq 1$ such that  for any $n,m$
\begin{equation} \label{dimension}
   \dim \otimes^{m,s} \C^n = \sum_{ |\alpha|=m} 1
   = \binom{n+m-1}{n-1} \le C^m \Big(1 + \frac{n}{m}\Big)^m\, ;
\end{equation}
this follows by an easy calculation using
Stirling's formula.

Recall
the notation for injective and projective full and symmetric tensor
products of Banach spaces (we
follow  \cite{Floret}): We  write $\otimes^m_\alpha X$ for the $m$th full
tensor product endowed with the  injective norm
$\alpha = \varepsilon$ or projective norm $\alpha = \pi$. Moreover, we write
 $\otimes^{m,s}_{\alpha_s}X$ for the $m$th
symmetric tensor product
of $X$ endowed with the symmetric injective norm
$\varepsilon_s$ or symmetric projective norm $\pi_s$, respectively.
If $\alpha=\varepsilon$ or $\pi$, then by  $ \otimes^{m,s}_\alpha X $ we mean   the $ m$th
symmetric tensor product equipped with  $\alpha$-norm induced by $\otimes^m_\alpha X$.
For $z\in \otimes^{m}X$ we have by the polarization formulas (see e.g.\ \cite[pp.~165,167]{Floret})
\begin{equation} \label{pol1}
   \varepsilon_s (S(z)) \le \varepsilon (S(z)) \le \varepsilon(z) \,\,\, \text{and} \,\,\, \varepsilon (S(z)) \le \frac{m^m}{m!} \varepsilon_s(S(z))\,,
\end{equation}
\begin{equation} \label{pol2}
\pi (S(z)) \le \pi_s (S(z)) \le \frac{m^m}{m!} \pi(z)\,\,\, \text{and} \,\,\, \pi(S(z)) \le \pi_s(S(z))\,.
\end{equation}
The symmetrization operator $ S_m : \otimes^{m}_\alpha X \to \otimes_\alpha^{m} X $ is a norm $1$ projection
onto $\otimes_\alpha^{m,s} X$, and in particular
$ S_m:  \otimes^{m}_\alpha X \to \otimes_{\alpha}^{m} X $ is a projector onto $\otimes_{\alpha_s}^{m,s} X $ of
norm $1$ for $ \alpha = \varepsilon$ and
of norm $\leq \frac{m^m}{m!} $ for $\alpha = \pi$.

Let us fix some useful index sets: For natural numbers
$m,n$ we define $M(m,n):=\{i=(i_1,...,i_m):\ \ i_1,...,i_m
\in\{1,...,n\}\}$  and
$J(m,n):=\!\{j=(j_1,...,j_m)\!\in {M(m,n)}: j_1\leq ...\leq j_m \}$.
We will consider the following
equivalence relation for multi-indices $i,j\in M(m,n)$:
$i \sim j\iff \exists \sigma \in \Pi_m$ such
that $i_{\sigma(k)}=j_k$ for every $k=1,\ldots,m$.
The class of equivalence defined by $i$ is denoted by $[i]$. Also we
denote by
$|i|:={\rm card}[i]$ the cardinal of $[i]$. Note that for each
$i \in M(m,n)$
there is a unique $j\in J(m,n)$ with $[i]=[j]$.
Moreover, for elements $x_1,\dots,x_m$ in a vector space $X$ and $i \in M(m,n)$ define
$ x_{i}:=x_{i_1}\otimes\dots\otimes x_{i_m}\in
\otimes^m X$.
Tn this context the following elementary lemma from  \cite[Lemma 1]{DeDiGaMa} will be used frequently:
\begin{Lem}\label{pony}
Let $m \in \mathbb{N}$ and  $X$ a finite dimensional vector
space  with a basis $(x_k)_{k=1}^n$. Denote the orthogonal basis of the algebraic dual
$X^\ast$ of $X$ by $(x_k^\ast)_{k=1}^n$, i.e $x_l^\ast( x_k)=\delta_{lk}$. Then
$(S(x_{j}))_{j\in J(m,n)}$ is a
basis of $ \otimes^{m,s} X $ and $(|j|S(x^\ast_{j}))_{j\in J(m,n)}$ is
its orthogonal basis in   $\otimes^{m,s}X^\ast$. Moreover, we have
\begin{equation*}
S(x_{i})=\frac{1}{|i|}\sum_{j\in [i]}x_{j} \,\,\, \text{for all} \,\,\, i \in M(m,n)\,.
\end{equation*}
\end{Lem}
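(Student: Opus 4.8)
The plan is to argue entirely at the level of the monomial tensors. Recall that $(x_i)_{i\in M(m,n)}$ is a basis of $\otimes^m X$, that $S=S_m$ is a projector of $\otimes^m X$ onto $\otimes^{m,s}X$, and that straight from the definition of $S$ one has $S(x_i)=S(x_{i'})$ whenever $i\sim i'$. Since each class $[i]$ meets $J(m,n)$ in exactly one point, it follows at once that $(S(x_j))_{j\in J(m,n)}$ generates $\otimes^{m,s}X=S(\otimes^m X)$.

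First I would establish the displayed identity. By definition $S(x_i)=\frac1{m!}\sum_{\sigma\in\Pi_m}x_{\sigma\cdot i}$ with $\sigma\cdot i:=(i_{\sigma(1)},\dots,i_{\sigma(m)})$. Sorting this sum according to the value of $\sigma\cdot i$: for a fixed $j\in[i]$ the number of $\sigma$ with $\sigma\cdot i=j$ equals the order of the stabiliser of $i$ in $\Pi_m$, which by orbit--stabiliser is $m!/\card[i]=m!/|i|$ (the orbit of $i$ being exactly $[i]$). Hence each $x_j$, $j\in[i]$, appears with weight $\frac1{m!}\cdot\frac{m!}{|i|}=\frac1{|i|}$, which is the claim. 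This is purely combinatorial, so the same formula holds with $X$, $x_k$ replaced by $X^*$, $x_k^*$; note in particular that $|j|$ does not depend on whether it is computed in $X$ or in $X^*$.

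Next I would check linear independence of $(S(x_j))_{j\in J(m,n)}$, which together with the spanning remark above makes it a basis (alternatively one may invoke $\dim\otimes^{m,s}\C^n=\card J(m,n)$ from \eqref{dimension}). If $\sum_{j\in J(m,n)}c_j S(x_j)=0$, then by the identity just proved $\sum_{j\in J(m,n)}\frac{c_j}{|j|}\sum_{k\in[j]}x_k=0$; as the classes $[j]$, $j\in J(m,n)$, are pairwise disjoint and $(x_k)_{k\in M(m,n)}$ is a basis of $\otimes^m X$, comparing coefficients forces all $c_j$ to vanish. The identical argument applied to $X^*$ shows that $(S(x_j^*))_{j\in J(m,n)}$, hence also $(|j|S(x_j^*))_{j\in J(m,n)}$, is a basis of $\otimes^{m,s}X^*$. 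For biorthogonality I would use the canonical bilinear pairing $\otimes^m X^*\times\otimes^m X\to\C$ with $\langle x_p^*,x_q\rangle=\prod_{l=1}^m x_{p_l}^*(x_{q_l})=\delta_{pq}$ for $p,q\in M(m,n)$, together with the combinatorial identity on both $X$ and $X^*$ (which gives $|j|S(x_j^*)=\sum_{p\in[j]}x_p^*$): for $j,l\in J(m,n)$,
\[
\big\langle |j|\,S(x_j^*),\,S(x_l)\big\rangle=\frac1{|l|}\sum_{p\in[j]}\sum_{q\in[l]}\delta_{pq}=\frac{\card([j]\cap[l])}{|l|}=\delta_{jl},
\]
since distinct classes indexed by $J(m,n)$ are disjoint while $\card[j]=|j|=|l|$ when $j=l$. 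This yields the asserted biorthogonality and completes the proof.

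I do not expect a genuine obstacle here; the only points needing care are the double role of $|i|$ --- it is simultaneously the orbit size entering the averaging identity and the multiplicity with which a diagonal coincidence $[j]\cap[l]$ is counted in the pairing --- and checking that the canonical duality of the full tensor products restricts consistently to the symmetric subspaces, so that $\langle S(\,\cdot\,),S(\,\cdot\,)\rangle$ is unambiguous.
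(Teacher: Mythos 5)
Your proof is correct. Note that the paper itself does not supply a proof of this lemma but simply cites it from \cite[Lemma 1]{DeDiGaMa}; your argument is the natural one, and all the steps check out: the orbit--stabiliser count gives the displayed averaging identity, disjointness of the classes $[j]$ for $j\in J(m,n)$ gives linear independence (and spanning is clear since $S$ is onto $\otimes^{m,s}X$ and collapses each class to a single symmetrised tensor), and the duality computation $\langle\,|j|S(x_j^*),S(x_l)\rangle=\card([j]\cap[l])/|l|=\delta_{jl}$ is exactly right. The one point you flag --- that the canonical pairing of $\otimes^m X^*$ with $\otimes^m X$ restricts consistently to the symmetric parts --- is indeed the only thing requiring a word; it follows because $S_m$ is self-adjoint for this pairing ($\langle S(u),v\rangle=\langle u,S(v)\rangle$, by reindexing over $\sigma\mapsto\sigma^{-1}$), which you use implicitly when you compute $\langle|j|S(x_j^*),S(x_l)\rangle$ by expanding both factors rather than just one.
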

There is a one-to-one correspondence between
$J(m,n)$ and $\Lambda(m,n)=\{ \alpha \in \mathbb{N}_{0}^{n} \ \colon | \alpha |=m \}$:
 If $j \in
J(m,n)$ there is an associated multi-index $\alpha$ given by
$\alpha_{r}= |\{ k \colon j_{k} = r \}|$ (i.e. $\alpha_{1}$ is the number of $1$'s in
$j$, $\alpha_{2}$ is the number of $2$'s, \dots), and conversely, if $\alpha \in \Lambda(m,n)$, then
the associated index is given by $j = (1, \stackrel{\alpha_{1}}{\dots} , 1,
2,\stackrel{\alpha_{2}}{\dots} ,2 , \dots ) \in
J(m,n)$. We have $$|j|=m!/\alpha!\,.$$
Moreover, identifying $z^{\alpha}= z^{j_1} \ldots z^{j_m} =S(e_j)$ we have
\begin{eqnarray*}
\chi_{\mon}(\mathcal{P}(^mX)) =
\chi \big( (S(e_j)\big)_{j\in J(m,n)} ; \otimes_{\varepsilon_s}^{m,s}\, X^*\big).
\end{eqnarray*}

Finally we mention the following isometric equalities
 which will be used frequently: For every
finite dimensional Banach space $X$  we have
\begin{equation} \label{duality}
\otimes_\varepsilon X^* = (\otimes_\pi^m X)^* \,\,\,
\text{and} \,\,\, \otimes^{m,s}_{\varepsilon_s} X^* =  (\otimes^{m,s}_{\pi_s} X)^*\,,
\end{equation}
as well as the identifications
\begin{gather*}
   \otimes^{m}_{\varepsilon} X^* = \mathcal{L}(^mX), \,\,
 (x_1^* \otimes \cdots \otimes x_m^*)  \rightsquigarrow [x_1 \otimes \cdots \otimes x_m \rightsquigarrow \prod_k x_k^*(x_k)]\,, \\
   \otimes^{m,s}_{\varepsilon_s} X^*  = \mathcal{L}_s(^mX) = \mathcal{P}(^mX), \,\,
   x^* \otimes \cdots \otimes x^*  \rightsquigarrow [x \rightsquigarrow x^*(x)^m]\, .
\end{gather*}

\section{A fundamental estimate and the proof of Theorem \ref{mainresult}}
\noindent
Recall that the $1$-summing norm of a
linear operator $T: X \rightarrow Y$ (between finite dimensional Banach spaces) is given by
\[\pi_1(T):=\sup\{\sum_{i=1}^n \|Tx_i\|\ : \
\|\sum_{i=1}^n \lambda_ix_i\|\leq 1, n \in \mathbb{N}, |\lambda_i|\leq
1\}\,;
\]
it is well known that
\begin{equation} \label{ten}
\pi_1(T)=\sup_n \|id \otimes T : \ell_1^n \otimes_\varepsilon X
\longrightarrow
\ell_1^n \otimes_\pi Y = \ell^n_1(Y)\|
\end{equation}
(see e.g. \cite{DefFlo} or \cite{DiJaTo95}).
Define for  $m$ and $n$  the canonical mapping
\begin{equation} \label{T}
\begin{array}{lccc}
T:&\mathcal{P}(^m \ell_\infty^n)&{\longrightarrow} &\ell_2^{n+m-1\choose n-1}
\\&\sum_{|\alpha| = m} a_\alpha z^\alpha  & \mapsto &
\big(  a_\alpha \big)_{|\alpha|=m}.
\end{array}
\end{equation}
The fundamental tool of the whole paper  is an estimate for the $1$-summing norm of $T$. The proof is modelled along the proof
of Therorem 3.2 from the phd-thesis of F. Bayart
\cite{Ba} which itself is based on a hypercontractivity result of A. Bonami \cite{Bo}.
\begin{Lem} \label{absolut} For each $m$ and $n$  the operator defined in \eqref{T} satisfies
\[\pi_1(T: \mathcal{P}(^m \ell_\infty^n)\longrightarrow\ell_2^{n+m-1\choose n-1}) \leq \sqrt{2}^m\,.\]
\end{Lem}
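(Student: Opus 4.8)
\textbf{Proof plan for Lemma~\ref{absolut}.}
The plan is to use the tensor description~\eqref{ten} of the $1$-summing norm and test it against an arbitrary element $u = \sum_{i=1}^N e_i \otimes p_i \in \ell_1^N \otimes_\varepsilon \mathcal{P}(^m\ell_\infty^n)$ of norm $\le 1$, where each $p_i(z) = \sum_{|\alpha|=m} a_\alpha^{(i)} z^\alpha$. Writing $P(z) := \sum_i \|Tp_i\|_2\, e_i$, I must bound $\sum_i \|Tp_i\|_2 = \sum_i \big(\sum_{|\alpha|=m} |a_\alpha^{(i)}|^2\big)^{1/2}$ by $\sqrt{2}^m$. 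The first step is to realize $\|Tp_i\|_2$ probabilistically: if $(z_k)_{k=1}^n$ are independent Steinhaus variables (uniform on the circle), then $\big(\sum_{|\alpha|=m}|a_\alpha^{(i)}|^2\big)^{1/2}$ is comparable to an $L^2$-norm $\|p_i(z)\|_{L^2(\Tt^n)}$ — in fact by orthogonality of the monomials $z^\alpha$ on the torus this is an honest \emph{equality}. So $\sum_i \|Tp_i\|_2 = \sum_i \|p_i\|_{L^2(\Tt^n)}$.

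The heart of the argument is to replace the $L^2$-norms by $L^1$-norms at the cost of a constant $\sqrt 2^m$, using Bonami's hypercontractivity: for an $m$-homogeneous polynomial $q$ on $\Tt^n$ one has $\|q\|_{L^2(\Tt^n)} \le \sqrt{2}^{\,m}\, \|q\|_{L^1(\Tt^n)}$ (this is the polynomial/Steinhaus form of the Bonami--Beckner inequality, reversing $L^2 \le \|\cdot\|_q$ via the two-sided nature of hypercontractive estimates on homogeneous chaoses; it is exactly the ingredient used in Bayart's Theorem~3.2). Applying this to each $p_i$ gives
\[
\sum_i \|Tp_i\|_2 \;=\; \sum_i \|p_i\|_{L^2(\Tt^n)} \;\le\; \sqrt{2}^{\,m} \sum_i \|p_i\|_{L^1(\Tt^n)} \;=\; \sqrt{2}^{\,m} \int_{\Tt^n} \sum_i |p_i(z)|\, dz.
\]
Now for each fixed $z \in \Tt^n \subset \DD^n$ the vector $(p_i(z))_i$ is just the image of $u$ under $\mathrm{id}\otimes \delta_z : \ell_1^N \otimes_\varepsilon \mathcal{P}(^m\ell_\infty^n) \to \ell_1^N$, where $\delta_z$ is evaluation at $z$, a functional of norm $\le 1$ on $\mathcal{P}(^m\ell_\infty^n)$; hence $\sum_i |p_i(z)| = \|(\mathrm{id}\otimes\delta_z)u\|_{\ell_1^N} \le \|u\|_\varepsilon \le 1$. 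Integrating over $\Tt^n$ (a probability space) keeps the bound $\le 1$, so $\sum_i \|Tp_i\|_2 \le \sqrt{2}^{\,m}$, which by~\eqref{ten} is exactly $\pi_1(T) \le \sqrt 2^{\,m}$.

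\textbf{Main obstacle.} The delicate point is the hypercontractive estimate $\|q\|_{L^2} \le \sqrt{2}^{\,m}\|q\|_{L^1}$ for $m$-homogeneous polynomials in Steinhaus variables with the \emph{sharp} constant $\sqrt2$ per degree; the naive route (H\"older plus $\|q\|_{L^4}\le \sqrt3^{\,m}\|q\|_{L^2}$-type bounds) gives a worse constant, so one genuinely needs Bonami's result in the form tailored to homogeneous chaos (Bayart~\cite{Ba}, following Bonami~\cite{Bo}) rather than a soft interpolation. Once that inequality is in hand, the rest is the routine tensor-norm bookkeeping sketched above; one should also double check that the passage from the full polydisc sup-norm to the torus $L^1$-norm is harmless, which it is since $\Tt^n \subset \overline{\DD^n}$ and by the maximum principle the sup over $\DD^n$ equals the sup over $\Tt^n$, in particular dominates every $L^p(\Tt^n)$-norm.
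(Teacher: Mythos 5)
Your proof is correct and takes essentially the same route as the paper: both reduce the claim to the hypercontractive estimate $\|q\|_{L^2(\Tt^n)} \le \sqrt{2}^{\,m}\|q\|_{L^1(\Tt^n)}$ for $m$-homogeneous polynomials (Bonami/Bayart), combined with the observation that the sup-norm on $\DD^n$ makes point evaluations on $\Tt^n$ contractive, so integrating the pointwise $\ell_1$-norm over $\Tt^n$ absorbs the $\ell_1^N\otimes_\varepsilon$ normalization. The paper packages this last step as the classical fact $\pi_1\bigl(L_\infty(\mu^n)\hookrightarrow L_1(\mu^n)\bigr)=1$ together with the ideal property of $\pi_1$, and — unlike your sketch, which cites Bayart as a black box — actually derives the $L^2$--$L^1$ estimate from Bonami's one-variable result via an iterated continuous Minkowski argument.
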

\begin{proof}
Let $\mu$ the normalized Lebesgue measure on the torus
$\Tt = \{z\in \C: |z| = 1\}$, and $\mu^n := \otimes_{k=1}^n \mu$ the product measure on the $n$-
dimesinonal torus $\Tt^n$.
It is well known that the $\pi_1$-norm  of the canonical inclusion $L_\infty (\mu^n) \hookrightarrow L_1 (\mu^n)$
equals $1$ (see e.g. \cite{DefFlo} or \cite{DiJaTo95}). Since
$\mathcal{P} (^m\ell_\infty^n)$ is an isometric subspace of $L_\infty (\mu^n)$ (maximum modulus theorem),
it remains to show that for
every $m$-homogeneous
polynomial
$P(z) =\sum\limits_{|\alpha|=m} a_\alpha z^\alpha$
on $\C^n$
\[
\big( \sum\limits_{|\alpha| = m} |a_\alpha|^2 \big)^{\frac{1}{2}} =
\Big\| \D\sum\limits_{|\alpha|=m} a_\alpha z^\alpha\Big\|_{L_2 (\mu^n)}
\leq \sqrt{2}^m \, \Big\| \D\sum\limits_{|\alpha|=m} a_\alpha z^\alpha\Big\|_{L_1 (\mu^n)}
\]
(the first equality is a consequence of the orthogonality of the monomials in $L_2 (\mu^n)$).
Now we follow precisely the proof of F. Bayart \cite[Theorem 3.2.]{Ba}
A result of A. Bonami \cite[Theorem III 7]{Bo} states that for every polynomial
$\sum\limits_{\nu=0}^m a_\nu z^\nu$ in one complex variable\[
\Big\| \D\sum\limits_{\nu=0}^m\, \D\frac{1}{\sqrt{2}^\nu}\, a_\nu z^\nu\Big\|_{L_2(\mu)} \leq
\Big\| \D\sum\limits_{\nu=0}^m \, a_\nu z^\nu\Big\|_{L_1 (\mu)}\,.
\]
But then we conclude with the continuous Minkowski inequality that
\[
\begin{array}{l}
\D\frac{1}{\sqrt{2}^m} \, \left( \D\int\limits_{\Tt^n} \Big| \D\sum\limits_{|\alpha|=m} a_\alpha
  z_1^{\alpha_1}\ldots z_n^{\alpha_n}\Big|^2\, d \mu^n (z)\right)^{\frac{1}{2}} \\ \\

= \left( \D\int\limits_{\Tt^{n-1}}\, \D\int\limits_{\Tt} \,
  \Big| P(\frac{z_1}{\sqrt{2}},\ldots,\frac{z_n}{\sqrt{2}})\Big|^2 \, d\mu (z_n)\, d \mu^{n-1}\,
	(z_1,\ldots,z_{n-1})\right)^{\frac{1}{2}} \\ \\

\leq \left(	\D\int\limits_{\Tt^{n-1}} \left( \D\int\limits_\Tt \,
\Big|
P(\frac{z_1}{\sqrt{2}},\ldots,\frac{z_{n-1}}{\sqrt{2}},z_n)
\Big| \,
d\mu (z_n)\right)^2 \, d\mu^{n-1}\, (z_1,\cdots, z_{n-1})\right)^{\frac{1}{2}}
\\
\leq \D\int\limits_\Tt \, \left( \D\int\limits_{\Tt^{n-1}}
\Big| P(\frac{z_1}{\sqrt{2}},\ldots,\frac{z_{n-1}}{\sqrt{2}}, z_n)\Big|^2 ~
d\mu^{n-1}\, (z_1,\ldots,z_{n-1})\,\right)^{\frac{1}{2}} d\mu (z_n).
\end{array}
\]
The same argument applied to the other coordinates $z_{n-1}, \ldots, z_1$ gives then
as desired
\begin{align*}
\D\frac{1}{\sqrt{2}^m} \,  \left( \D\int\limits_{\Tt^n} \Big| P(z)\Big|^2\,  d \mu^n (z)\right)^{\frac{1}{2}}
  \leq
\,\D\int\limits_{\Tt^n}\Big| P(z)\Big| \, d\mu^n (z)\,.
\end{align*}
\end{proof}
For the proof of  Theorem \ref{mainresult} we will need another lemma due to Blei \cite{Bl}:
For all families $(c_i)_{i \in M(m,n)}$ of complex numbers
\begin{equation} \label{Blei}
\Big(\sum_{i \in M(m,n)} |c_i|^{\frac{2m}{m+1}} \Big)^{\frac{m+1}{2m}}
\leq \prod_{1 \leq k \leq m} \Big(\sum_{i_k = 1}^n \Big(\sum_{i^k \in M(m-1,n)} |c_i|^2 \Big)^\frac{1}{2}\Big)^{\frac{1}{m}}\,;.
\end{equation}
here the following notation is used
$$
\sum_{i^k \in M(m-1,n)} := \sum_{i_1,\ldots,i_{k-1},i_{k+1, \ldots, i_m}= 1}^n
$$
Finally we are  ready to give {\bf the proof of Theorem \ref{mainresult}}:
Again we use the representation $$ \mathcal{P}(^{m} \ell_\infty^n ) = \otimes_{\varepsilon_s}^{m,s} \ell_1^n\,. $$
\noindent Step 1. Define
\[
\begin{array}{lccc}
I:&\otimes^{m,s}_\varepsilon \ell_1^n &{\longrightarrow} &\ell_2 (M(m,n))\,
\\&\sum_{i \in M(m,n)} r_i e_i \,\,,r_i = r_j  \,\,\,\text{for} \,\, [i]= [j] & \mapsto &
(\sqrt{|i|} r_i)_{i \in M(m,n)}\,.
\end{array}
\]
We show that  $\pi_1(I) \leq \sqrt{2}^{m} $. Indeed, from the preceding lemma and \eqref{pol1}
we get that the $ \pi_1$-norm of the map
\[
\begin{array}{lccc}
T :& \otimes^{m,s}_\varepsilon \ell_1^n
&{\longrightarrow} &
\ell_2(J(m,n))
\\&
\sum_{j \in J(m,n)} \lambda_j S(e_j)
& \mapsto &
(\lambda_j)_{j \in J(m,n)}.
\end{array}
\]
is $ \leq \sqrt{2}^{m} $. Note that for
$$ \sum_{i \in M(m,n)} r_i e_i \in
\otimes^{m,s}\ell_1^n
 ~ \,\,\,\mbox{with}\,\,\, r_i = r_j \mbox{ for}\,\,\, [i]= [j], $$
we have (see \ref{pony})
$$
\sum_{i \in M(m,n)} r_i e_i = \sum_{j \in J(m,n)} |j|r_j S(e_j),
$$
hence the $\pi_1$-norm of (the same) map
\[
\begin{array}{lccc}
K:&
 \otimes^{m,s}_\varepsilon \ell_1^n
&{\longrightarrow} &
\ell_2(J(m,n))
\\&
\sum_{i \in M(m,n)} r_i e_i \,\,,r_i = r_j  \,\,\,\text{for} \,\, [i]= [j]
& \mapsto &
(|j|r_j)_{j \in J(m,n)}
\end{array}
\]
is $ \leq \sqrt{2}^m $. Now consider

\[
\begin{array}{lccc}
J:&
 \ell_2(J(m,n))
&{\longrightarrow} &
\ell_2(M(m,n))
\\&
(\lambda_j)_{j \in J(m,n)}
& \mapsto &
\big(\frac{\lambda_j}{\sqrt{|j|}} \big)_{i \in [j],~ j \in J(m,n)}\,.
\end{array}
\]
Then $ J $ is an isometry. Since $I = J \circ K$ we obtain as desired $ \pi_1(I) \leq \sqrt{2}^{m}$.
\\
Step 2. We show that
\begin{align*}
\sum_{i_k = 1}^n \Big( \sum_{i^{k} \in M(m,n)}  (\sqrt{|(i_0,i_1,\ldots,i_m)|} &
|\lambda_{(i_0,i_1,\ldots,i_m)}|)^2 \Big)^{\frac{1}{2}} \\ &\leq \sqrt{2}^m \sqrt{m+1}
\,\,\varepsilon \big( \sum_{i \in M(m+1,n)}  \lambda_i e_i \big),
\end{align*}
where  $0 \leq k \leq m$ and
$(\lambda_i)_{i \in  M(m+1,n)}$ is a family of complex numbers for which
$ \lambda_{(i_0,i_1,\ldots,i_m)} = \lambda_{(j_0,j_1,\ldots,j_m)}$ for $[(i_0,i_1,\ldots,i_m)]= [(j_0,j_1,\ldots,j_m)].$
>From now on we denote $ (i_0,i_1,\ldots,i_m) =: (i_0,i) \in M(m+1,n) $, and have hence $\lambda_{(i_0,i)} = \lambda_{(k_0,k)}$
if $[(i_0,i)]=[(k_0,k)] $.
If we consider  $\otimes_\varepsilon^{m+1,s} \ell_1^n $ as a subspace of $ \ell_1^n \otimes_\varepsilon \otimes_\varepsilon^{m,s} \ell_1^n$,
then Step 1 and \eqref{ten} imply that the (operator) norm of the mapping
\[\otimes_{\varepsilon}^{m+1,s}\hookrightarrow \ell_1^n \otimes_\varepsilon
(\otimes_{\varepsilon}^{m,s} \ell_1^n) \to \ell_1^n \otimes_\pi \ell_2 (M(m,n)) = \ell_1^n (\ell_2(M(m,n)))\,,\]
which assigns to every
\[
 \sum_{(i_0,i) \in M(m+1,n)} \lambda_{(i_0,i)} e_{(i_o,i)}
= \sum_{i_0 = 1}^n e_{i_0} \otimes \sum_{i \in M(m,n)} \lambda_{(i_0,i)} e_i
\in \ell_1^n \otimes (\otimes_{\varepsilon}^{m,s} \ell_1^n)
 \]
the element
\[\big( (\sqrt{|i|} \lambda_{(i_0,i)})_{i \in M(m,n)}\big)_{1 \leq i_0 \leq n} \in \ell^n_1(\ell_2(M(m,n)))\,,
\]
is $\leq \sqrt{2}^{m} $.
But this means precisely that
$$
\sum_{i_0 = 1}^n \Big( \sum_{i \in M(m,n)} (\sqrt{|i|} |\lambda_{(i_0,i)}|)^2 \Big)^{\frac{1}{2}} \leq \sqrt{2}^{m}
\varepsilon \big( \sum_{(i_0,i) \in M(m+1,n)}  \lambda_{(i_0,i)} e_{(i_0,i)} \big).
$$
Since $$\frac{|(i_0,i)|}{|i|} = \frac{m+1}{|\{ \nu \in \{1,\ldots, m \} :  i_\nu = i_0  \}| +1} \leq m+1 \,,$$
we get
\begin{align*}
\sum_{i_0 = 1}^n \Big( \sum_{i \in M(m,n)} (\sqrt{|(i_0,i)|} & |\lambda_{(i_0,i)}|)^2 \Big)^{\frac{1}{2}} \\ & \leq \sqrt{2}^{m} \sqrt{m+1}
\,\,\varepsilon \big( \sum_{(i_0,i) \in M(m+1,n)}  \lambda_{(i_0,i)} e_{(i_0,i)} \big).
\end{align*}
Clearly,  we can apply this inequality also to the other coordinates $i_1,\ldots,i_m$, and hence we obtain as desired
for all  $0 \leq k \leq m $ and all $\lambda_i \in M(m+1,n)$  with $\lambda_i 0 \lambda_j $
for $[i]= [j]$
that
$$\sum_{i_k = 1}^n \Big( \sum_{i^{k} \in M(m,n)} (\sqrt{|i|} |\lambda_{i}|)^2 \Big)^{\frac{1}{2}} \leq \sqrt{2}^{m} \sqrt{m+1}
\,\,\varepsilon \big( \sum_{i \in M(m+1,n)}  \lambda_i e_i \big). $$ \\
Step 3. Blei's inequality \eqref{Blei} applied to preceding inequality from Step 2 (for $ m-1 $ instead of $m$) implies that
$$
\big(\sum_{ i \in M(m,n)} (\sqrt{|i|}|\lambda_{i}|)^{\frac{2m}{m+1}} \big)^{\frac{m+1}{2m}} \leq \sqrt{2}^{m-1} \sqrt{m} \,\,
\varepsilon \big( \sum_{ i \in M(m,n)}  \lambda_{ i } e_{ i } \big).
$$
Step 4. Finally we show for all families $(\lambda_j)_{j \in J(m,n)}$ of complex numbers that
$$ \big(\sum_{j\in J(m,n)}  |\lambda_j|^{\frac{2m}{m+1}}\big)^{\frac{m+1}{2m}} \leq
\sqrt{2}^{m-1}\sqrt{m} \varepsilon \big( \sum_{j \in J(m,n)}  \lambda_j S(e_j)   \big)\,,
$$  and this finishes the proof of Theorem \ref{mainresult}: From Step 3 applied to $ \tilde{\lambda_i} :=
\frac{\lambda_j}{|j|}, ~ i \in [j] $, we obtain

\begin{align*}
 \big(\sum_{j\in J(m,n)}  |\lambda_j|^{\frac{2m}{m+1}} \big)^{\frac{m+1}{2m}}  &  =
 \big(\sum_{j\in J(m,n)} \sum_{i \in [j]} |j|^{\frac{-1}{m+1}}
( \sqrt{|j|}\frac{|\lambda_j|}{|j|})^{\frac{2m}{m+1}}\big)^{\frac{m+1}{2m}}   \\ \\
&\leq
 \big(\sum_{j\in J(m,n)} \sum_{i \in [j]}
( \sqrt{|j|}\frac{|\lambda_j|}{|j|})^{\frac{2m}{m+1}}\big)^{\frac{m+1}{2m}}   \\ & =
\big(\sum_{i\in M(m,n)}
( \sqrt{|i|}\tilde{|\lambda_i|})^{\frac{2m}{m+1}}\big)^{\frac{m+1}{2m}}   \\ & \leq
\sqrt{2}^{m-1} \sqrt{m} ~ \varepsilon \big( \sum_{i \in M(m,n)} \tilde{\lambda_i} e_i  \big) \\ &=
 \sqrt{2}^{m-1} \sqrt{m}   ~  \varepsilon \big( \sum_{j \in J(m,n)} \lambda_j S( e_j) \big)
\\ &\leq
\sqrt{2} ^{m-1} \sqrt{m} \frac{m^m}{m!}  ~  \varepsilon_s \big( \sum_{j \in J(m,n)} \lambda_j S( e_j) \big)
\end{align*}
Since there obviously is some constant $C \geq 1$ such that
 $\sqrt{2}^{m-1} \sqrt{m} \frac{m^m}{m!} \leq C^m$ for all $m$, the proof is complete. $\,\, \Box$

\section{Gordon-Lewis and unconditional basis constants}
A  Banach space invariant  very closely related to  unconditional basis constants
is the
Gordon-Lewis constant invented in the classical paper \cite{GOLE}. A
Banach
space $X$ is said to  have
the Gordon-Lewis property if every 1-summing operator
$T:X\longrightarrow \ell_2$
allows a factorization
$ T:X \stackrel{R}{\longrightarrow}L_1(\mu)
\stackrel{S}{\longrightarrow}\ell_2$
($\mu$ some measure, $R$ and $S$ operators). In this case, there is a
constant $c\geq0$ such that
$\gamma_1(T):=\inf \|R\| \|S\|\leq c\pi_1(T)$ for all $T:X\longrightarrow \ell_2$,
and the best such $c$ is called the Gordon-Lewis constant of $X$ and
denoted
by $\mbox{gl}(X)$. We are going to use the obvious fact that for two Banach spaces $X,Y$
\begin{equation}\label{golemmazur}
\mbox{gl}(X) \leq d(X,Y) \mbox{gl}(Y)\,.
\end{equation}

A fundamental tool for the study of unconditionality in Banach spaces is
the
Gordon-Lewis inequality from \cite{GOLE} (see also
\cite[17.7]{DiJaTo95}): For every unconditional basis  $(x_i)$ of
a (complex) Banach space $X$ we have
\begin{equation}\label{golem}
\mbox{gl}(X) \leq 2 \chi((x_i))\,.
\end{equation}
We now follow a cycle of
ideas invented in \cite{PI78,Schutt} and which was later applied to spaces of $m$-homogeneous polynomials in \cite{DeDiGaMa}.
Given a Banach space $X_n = (\C^n, \|\cdot\|)$ for which
the $e_k$'s form a $1$-unconditional basis,
for Banach spaces $\mathcal{P}(^m X_n)$ the converse of the Gordon-Lewis inequality holds true;
the main difference to \cite[Theorem 1]{DeDiGaMa} is the hypercontractivity  the constant.

\begin{Prop} \label{maingole}
There are constants $C \geq 1$ such that for  each Banach space $X_n = (\C^n, \|\cdot\|)$ for which
the $e_k$'s form a $1$-unconditional basis, we
$$
\chi_{\mon}(\mathcal{P}(^m X_n)) \leq C^m {\rm gl} (\mathcal{P}(^m X_n)).
$$
\end{Prop}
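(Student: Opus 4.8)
The plan is to prove the converse Gordon--Lewis inequality for $\mathcal{P}(^m X_n)$ by exploiting the fact that the monomials in such a polynomial space form an \emph{unconditional} basis up to a controllable constant, and that the Gordon--Lewis property transfers nicely through the tensor structure. The key point is that, unlike in the general situation, we now have the fundamental estimate from Lemma~\ref{absolut} (and its consequences, culminating in the proof of Theorem~\ref{mainresult}) available, so all the constants that enter can be bounded by $C^m$ for an absolute $C$.

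\textbf{Step 1: reduce to a $1$-summing estimate.} By definition $\mathrm{gl}(\mathcal{P}(^m X_n))$ is the best constant relating $\gamma_1(T)$ and $\pi_1(T)$ for operators $T : \mathcal{P}(^m X_n) \to \ell_2$. So it suffices to find one well-chosen operator $T$ for which $\gamma_1(T) \ge c^{-m} \chi_{\mon}(\mathcal{P}(^m X_n)) \, \pi_1(T)$; the natural candidate is (a variant of) the coefficient map $T$ of \eqref{T}, or rather its symmetric analogue sending $\sum_{|\alpha|=m} a_\alpha z^\alpha$ to a suitably weighted sequence $(w_\alpha a_\alpha)_{|\alpha|=m} \in \ell_2(J(m,n))$, with weights chosen (as in Step~1 of the proof of Theorem~\ref{mainresult}, namely $w_\alpha = \sqrt{|j_\alpha|}$) so that the factorization through $L_1$ picks up exactly the monomial unconditional basis constant. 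Here one uses the identification $\mathcal{P}(^m X_n) = \otimes^{m,s}_{\varepsilon_s} X_n^*$ from \eqref{duality} and the fact that the $e_k^*$ form a $1$-unconditional basis of $X_n^*$.

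\textbf{Step 2: lower bound for $\gamma_1(T)$ via unconditionality.} The heart of the matter: if $T = S \circ R$ with $R : \mathcal{P}(^m X_n) \to L_1(\mu)$ and $S : L_1(\mu) \to \ell_2$, then because the monomials $z^\alpha$ sit inside $L_1(\mu)$ after applying $R$, one can run the classical Pietsch/Sch\"utt-style averaging argument (the cycle of ideas from \cite{PI78,Schutt,DeDiGaMa}): the $L_1$-space has a natural lattice structure, $1$-summing operators out of it behave well, and one extracts from $\|R\|\,\|S\|$ a lower bound of the form $c^{-m}\chi_{\mon}(\mathcal{P}(^m X_n))$ times a quantity that is comparable to $\pi_1(T)$. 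Concretely, one compares $\pi_1(T)$ — which by Lemma~\ref{absolut} combined with \eqref{pol1} and the polarization bound $\frac{m^m}{m!} \le C^m$ is bounded above by $C^m$ — with the norm of $T$ acting between the unconditional structure, and the gap is precisely $\chi_{\mon}$.

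\textbf{Main obstacle.} The delicate part is making the averaging argument produce the factor $\chi_{\mon}(\mathcal{P}(^m X_n))$ \emph{exactly} on the correct side, with only a $C^m$ loss and no hidden dependence on $n$. This is where the weights $\sqrt{|j|}$ and the dimension estimate \eqref{dimension} must be used carefully, and where one must invoke Lemma~\ref{pony} to pass between the full and symmetric tensor descriptions without losing more than $\frac{m^m}{m!} \le C^m$. I would follow \cite[Theorem 1]{DeDiGaMa} closely for the structure of this argument, substituting our hypercontractive $\pi_1(T) \le \sqrt{2}^m$ bound (Lemma~\ref{absolut}) wherever the older proof used a worse, $m$-dependent estimate; every other constant that appears (polarization, Stirling, Gordon--Lewis inequality \eqref{golem} with its factor $2$) is already at most $C^m$, so the final bound $\chi_{\mon}(\mathcal{P}(^m X_n)) \le C^m\,\mathrm{gl}(\mathcal{P}(^m X_n))$ follows by combining the pieces.
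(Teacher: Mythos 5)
Your plan is essentially the paper's proof: Proposition \ref{maingole} is established (in tensor-product language, as Proposition \ref{Gordon-Lewis-Equi}) by running the converse Gordon--Lewis argument of \cite[Theorem 1]{DeDiGaMa} --- concretely Lemma \ref{JFA}, which bounds the unconditional basis constant by $M_1 M_2\,\mathrm{gl}$ given $\pi_1$-control of two families of diagonal operators into $\ell_2$ --- with the new input being precisely the hypercontractive $\pi_1$-estimate of Lemma \ref{absolut}, which makes $M_1,M_2\leq \frac{m^m}{m!}\sqrt{2}^m\leq C^m$. One minor slip in your last paragraph: the Gordon--Lewis inequality \eqref{golem} points the wrong way and plays no role in this proposition (it is used only later, in Proposition \ref{golproj}); the $2^m$ in the final bound comes from applying $\sqrt{2}^m$ twice, once for each of the two diagonal operators Lemma \ref{JFA} requires, so your ``find one well-chosen operator $T$'' framing slightly understates that structure.
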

We prefer to prove this result in terms of symmetric tensor products; again we use the representation $ \mathcal{P}(^{m} X_n ) = \otimes_{\varepsilon_s}^{m,s} X_n^* $ (see \eqref{duality}).
In the following $\alpha$ will always be either the projective tensor norm $\pi$ or the
injective tensor norm $\varepsilon$, and $\alpha_s$ stands either for the symmetric  projective tensor norm $\pi_s$ or the
symmetric injective tensor norm $\varepsilon_s$. Moreover, we put $\pi^* = \varepsilon$ and  $\varepsilon^* = \pi$, as well as
$\pi^*_s = \varepsilon_s$ and  $\varepsilon^*_s = \pi_s$ (see \eqref{duality}). The following result is a reformulation of the preceding one with a more precise constant.

\begin{Prop} \label{Gordon-Lewis-Equi}
Let  $X$  be a Banach space with the 1-unconditional basis
$(x_k)_{k=1}^n$, and let $ \alpha_s $ be either $\pi_s$ or $\varepsilon_s$.
Then
\[
\begin{array}{l}
\chi \big( (S(x_i)\big)_{i\in J(m,n)} ; \otimes_{\alpha_s}^{m,s}\, X\big)
\leq \left( \D\frac{m^m}{m!}\right)^2\, 2^m {\rm gl} \, ( \otimes_{\alpha_s}^{m,s}\, X).
\end{array}
\]
\end{Prop}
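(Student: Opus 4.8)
Here is how I would attack this. The plan is to pass to diagonal multipliers and then run the Pisier--Sch\"utt circle of ideas underlying \cite[Theorem 1]{DeDiGaMa}, feeding in the hypercontractive $1$-summing bound of Lemma \ref{absolut} at the decisive point. Write $E:=\otimes^{m,s}_{\alpha_s}X$. It suffices to show that for every family $(\varepsilon_i)_{i\in J(m,n)}$ with $|\varepsilon_i|\le 1$ the diagonal operator $M_\varepsilon\colon E\to E$, $\sum_i\lambda_i S(x_i)\mapsto\sum_i\varepsilon_i\lambda_i S(x_i)$, satisfies $\|M_\varepsilon\|\le\left(\tfrac{m^m}{m!}\right)^2 2^m\,{\rm gl}(E)$; taking the supremum over $\varepsilon$ gives the assertion. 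Moreover, since $(\otimes^{m,s}_{\pi_s}X)^*=\otimes^{m,s}_{\varepsilon_s}X^*$ by \eqref{duality}, ${\rm gl}$ is self-dual, and by Lemma \ref{pony} the biorthogonal system of $(S(x_i))_i$ is, up to the positive scalars $|i|$ (irrelevant for $\chi$), the canonical basis of $\otimes^{m,s}_{\varepsilon_s}X^*$ — a space of the same kind, as $X^*$ again carries a $1$-unconditional basis — I would reduce at once to the case $\alpha_s=\varepsilon_s$.

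The heart of the matter is the factorization $M_\varepsilon=U\circ D_\varepsilon\circ V$, where $V\colon E\to\ell_2(J(m,n))$ is the coefficient map $\sum_i\lambda_i S(x_i)\mapsto(\lambda_i)_i$, where $D_\varepsilon$ is the diagonal operator of norm $\le1$ on $\ell_2(J(m,n))$, and where $U\colon\ell_2(J(m,n))\to E$ is the reconstruction $(z_i)_i\mapsto\sum_i z_i S(x_i)$. One side is treated \emph{without} ${\rm gl}$: by \eqref{pol1} the space $E=\otimes^{m,s}_{\varepsilon_s}X$ is $\tfrac{m^m}{m!}$-isomorphic to a subspace of the $C(K)$-space $\otimes^{m}_{\varepsilon}X$ with $K=(B_{X^*},w^*)^m$, so once $V$ is known to be $1$-summing, Pietsch's theorem factorizes $V=S\circ R$ through an $L_1$-space with $\|S\|\,\|R\|\le\tfrac{m^m}{m!}\,\pi_1(V)$. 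The other side uses ${\rm gl}$ exactly once: the adjoint $U^*\colon E^*=\otimes^{m,s}_{\pi_s}X^*\to\ell_2(J(m,n))$ is again (up to the scalars $|i|$) a coefficient map; applying the defining property of ${\rm gl}(E^*)={\rm gl}(E)$ to it and then dualizing factorizes $U=A\circ B$ through an $L_\infty$-space with $\|A\|\,\|B\|\le{\rm gl}(E)\,\pi_1(U^*)$. Composing, $M_\varepsilon=A\,(BD_\varepsilon S)\,R$ with $BD_\varepsilon S\colon L_1\to L_\infty$ of norm $\le\|B\|\,\|S\|$, so that
\[
\|M_\varepsilon\|\;\le\;\tfrac{m^m}{m!}\,\pi_1(V)\cdot{\rm gl}(E)\cdot\pi_1(U^*).
\]

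It then remains to estimate the two $1$-summing norms, and this is where hypercontractivity enters. For $X=\ell_1^n$ one has $E=\mathcal{P}(^m\ell_\infty^n)$ and $V$ is precisely the operator of Lemma \ref{absolut}, so $\pi_1(V)\le\sqrt{2}^m$; the bound $\pi_1(U^*)\le\tfrac{m^m}{m!}\sqrt{2}^m$ I would obtain from the same Bonami-hypercontractivity input (exactly as in the proof of Lemma \ref{absolut} and in Steps 1--4 of the proof of Theorem \ref{mainresult}), the polarization factor arising from \eqref{pol2}. For a general $X$ with $1$-unconditional basis one reduces to this model case via the two canonical norm-one maps $\ell_1^n\to X$, $e_k\mapsto x_k$, and $X\to\ell_\infty^n$, $x\mapsto(x_k^*(x))_k$. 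Substituting the two bounds into the displayed inequality gives $\|M_\varepsilon\|\le\left(\tfrac{m^m}{m!}\right)^2 2^m\,{\rm gl}(E)$, which is what we want.

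The main obstacle — and the whole point of the improvement over \cite[Theorem 1]{DeDiGaMa} — is securing \emph{both} $1$-summing norms with merely exponential (``hypercontractive'') growth in $m$ instead of the classical $\sqrt{m!}\,$-type bound; Lemma \ref{absolut} is exactly what makes this possible. The secondary technical points are arranging the factorization so that ${\rm gl}(E)$ appears only to the first power — which is why one side must go through the $C(K)$-embedding of $\otimes^{m,s}_{\varepsilon_s}X$ rather than through ${\rm gl}$ — and carrying out the (genuine, not merely formal) reduction of an arbitrary $X$ with $1$-unconditional basis to the model space $\ell_1^n$.
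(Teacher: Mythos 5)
Your high-level strategy is the right one: run the Pisier--Sch\"utt factorization underlying \cite[Theorem 1]{DeDiGaMa} and feed in Lemma~\ref{absolut} to get the hypercontractive constant. But the way you implement it — factoring the diagonal multiplier as $M_\varepsilon=U\circ D_\varepsilon\circ V$ through the \emph{fixed} coefficient map $V\colon E\to\ell_2(J(m,n))$ and its companion $U$, and then claiming $\pi_1(V)\le\sqrt2^{\,m}$ for a general $X$ with $1$-unconditional basis — breaks down at exactly the step that requires care.

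The absolute bound $\pi_1(V)\le\sqrt2^{\,m}$ is simply false in general. Take $X=\ell_\infty^n$, so that $E=\otimes^{m,s}_{\varepsilon_s}\ell_\infty^n=\mathcal P(^m\ell_1^n)$ and $V$ is the coefficient map $\sum a_\alpha z^\alpha\mapsto(a_\alpha)_\alpha$. The monomial $z^\alpha$ with $\alpha=(1,\dots,1,0,\dots)$ ($m$ ones) has norm $m^{-m}$ in $\mathcal P(^m\ell_1^n)$ while $\|Vz^\alpha\|_2=1$, hence already $\|V\|\ge m^m$ and a fortiori $\pi_1(V)\ge m^m$. So the norm of $V$ is wildly dependent on $X$, and there is no way to reduce its $\pi_1$-norm to the $\ell_1^n$ model by composing with the canonical maps $\ell_1^n\to X$ and $X\to\ell_\infty^n$: the coefficient map factors the \emph{wrong} way through $\otimes^{m,s}j$ (you would obtain $\pi_1(V_{\ell_1^n})\le\pi_1(V_X)$, the reverse of what you need), and composing with $\otimes^{m,s}i\colon E\to\otimes^{m,s}_{\varepsilon_s}\ell_\infty^n$ lands in $\mathcal P(^m\ell_1^n)$, which Lemma~\ref{absolut} does not control. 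This is not a technicality but the crux of the matter.

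The paper's proof avoids this by never fixing $V$. Lemma~\ref{JFA} (from \cite{DeDiGaMa}) asks for a \emph{relative} $1$-summing estimate over the whole family of diagonal operators $D_\lambda\colon Y\to\ell_2$, namely $\pi_1(D_\lambda)\le M_1\big\|\sum\lambda_i y_i^*\big\|_{Y^*}$, and the symmetric estimate for $D_\mu$ on $Y^*$. It is precisely the factor $\big\|\sum\lambda_i y_i^*\big\|_{Y^*}$ on the right-hand side that absorbs the geometry of $X$; in your notation it replaces the unavailable ``$\pi_1(V)\le\sqrt2^{\,m}$'' by a bound proportional to $\alpha_s^*\big(\sum_i c_i|i|S(x_i^*)\big)$, which is exactly what Lemmas~\ref{Ab1}--\ref{Ab3} establish (via the diagonal operator into $\mathcal L(^m\ell_\infty^n)$ and then Lemma~\ref{absolut}). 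Lemma~\ref{JFA} then packages the gl-side and the Pietsch-side factorizations so that only the two multiplicative constants $M_1=M_2=\frac{m^m}{m!}\sqrt2^{\,m}$ survive, giving $\left(\frac{m^m}{m!}\right)^2 2^m\,\mathrm{gl}(E)$. So while your factorization $M_\varepsilon=U D_\varepsilon V$ is morally what happens \emph{inside} the proof of Lemma~\ref{JFA}, it must be run with the variable diagonal family (so that each factor is normalized by the relevant $X$- or $X^*$-norm), not with the single coefficient/reconstruction pair. As written, your argument leaves an essential gap.
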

Again we devide the proof into several steps. The first is \cite[Lemma 4]{DeDiGaMa} which we repeat for the
sake completeness.

\begin{Lem} \label{JFA}
 Let $Y$ be a finite dimensional Banach
space with a basis $(y_i)_{i=1}^{n}$
and orthogonal  basis $(y^{\ast}_{j})_{j=1}^{n}$.
Suppose that there exist constants $M_1,M_2 \geq 1$ such that for every choice of
$\lambda,\mu \in \C^n$ the diasgonal mappings
$$
\begin{array}{clc}
\begin{array}{lccc}
D_\lambda:&Y
&{\longrightarrow} &\ell_2^{n}
\\&\sum_{i=1}^n  a_{i}y_{i}& \mapsto & (\lambda_{i}a_{i})_{i=1}^n
\end{array}
&\makebox[-4mm]{}&
\begin{array}{lccc}
D_\mu:&Y^\ast
&{\longrightarrow} &\ell_2^{n}
\\&\sum_{j=1}^n  a_{j}y^\ast_{j}& \mapsto & (\mu_{j}a_{j})_{j=1}^n
\end{array}
\end{array}
$$
satisfy
\begin{equation*}
\begin{array}{clc}
\pi_1(D_\lambda)\leq  M_1\|\sum_{i=1}^n \lambda_{i}y^\ast_{i}\|_{Y^\ast}
&,\makebox[2mm]{}
\pi_1(D_\mu)\leq  M_2\|\sum_{j=1}^n\mu_{j}y_{j}\|_Y\,.
\end{array}
\end{equation*}
Then
\[{\chi}((y_i))\leq M_1M_2\,gl(Y).\]
\end{Lem}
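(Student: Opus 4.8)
The plan is to follow the classical Gordon--Lewis scheme: start from a $1$-summing operator built out of the unconditionality constant of the monomial basis of $Y = \otimes^{m,s}_{\alpha_s} X$, factor it through an $L_1$-space using the hypothesis $\mathrm{gl}(Y) < \infty$, and then exploit the diagonal structure together with the orthogonality of the bases $(y_i)$ and $(y_j^\ast)$ to recover a bound on $\chi((y_i))$. Concretely, write $c := \chi((y_i))$ and consider an arbitrary scalar family $\lambda \in \C^n$; the role of $M_1$ and $M_2$ is to convert the norm of $\sum_i \lambda_i y_i^\ast$ in $Y^\ast$ (resp.\ $\sum_j \mu_j y_j$ in $Y$) into a $\pi_1$-estimate for the corresponding diagonal map into $\ell_2^n$.

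The key steps, in order, are as follows. First I would fix $\lambda$ with $\|\sum_i \lambda_i y_i^\ast\|_{Y^\ast} \le 1$ and use the factorization hypothesis: since $\mathrm{gl}(Y)$ controls $\gamma_1$ by $\pi_1$, the operator $D_\lambda : Y \to \ell_2^n$ factors as $Y \xrightarrow{R} L_1(\mu) \xrightarrow{S} \ell_2^n$ with $\|R\|\,\|S\| \le \mathrm{gl}(Y)\,\pi_1(D_\lambda) \le \mathrm{gl}(Y)\, M_1$. Second, I would pass to the adjoint: $D_\lambda^\ast : \ell_2^n \to Y^\ast$ factors through $L_\infty(\mu)$, and because $\ell_2^n$ is a Hilbert space one can absorb the $L_\infty \to Y^\ast$ part into a $2$-summing (hence $1$-summing up to constants, or directly via the diagonal trick) estimate. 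Third—and this is where the orthogonality of $(y_j^\ast)$ enters—one applies the \emph{second} hypothesis, the bound on $\pi_1(D_\mu)$ in terms of $\|\sum_j \mu_j y_j\|_Y$, to the composition, turning the $L_1$-factorization data back into a statement purely about coefficient sequences. Fourth, one unwinds: the composition $D_\mu \circ (\text{something}) \circ D_\lambda$, evaluated on basis vectors, produces a bilinear expression whose $\ell_2\times\ell_2$ duality is bounded by $M_1 M_2 \,\mathrm{gl}(Y)$, and taking the supremum over $\lambda,\mu$ in the respective unit balls yields $c \le M_1 M_2\, \mathrm{gl}(Y)$.

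The main obstacle I expect is the middle step: correctly threading the adjoint operators through the $L_1$/$L_\infty$ factorization so that the orthogonality relation $y_l^\ast(y_k) = \delta_{lk}$ is used to collapse a double sum into the right scalar pairing, without losing track of which norm lives on which side. In particular one must be careful that the factorization of $D_\lambda$ through $L_1(\mu)$ and the $\pi_1$-hypothesis on $D_\mu$ are applied to the \emph{same} underlying measure space (or that one reduces to this by a standard change of density / Pietsch factorization argument), since the constant $\mathrm{gl}(Y)$ is only available for maps \emph{into} $\ell_2$, not out of it. Once that bookkeeping is set up, the estimate is a routine chase of norms of compositions of diagonal operators and the conclusion $\chi((y_i)) \le M_1 M_2\, \mathrm{gl}(Y)$ drops out; this is then combined downstream (in the proof of Proposition \ref{Gordon-Lewis-Equi}) with the polarization constants $\tfrac{m^m}{m!}$ from \eqref{pol1}--\eqref{pol2} and the $\sqrt2^m$ bound of Lemma \ref{absolut} to furnish the values $M_1, M_2$ and hence the stated $\left(\tfrac{m^m}{m!}\right)^2 2^m$ factor.
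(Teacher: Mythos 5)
The paper does not actually prove this lemma: it only states it and cites \cite[Lemma 4]{DeDiGaMa} ``for the sake [of] completeness,'' so there is no paper-internal proof to compare against. Your reconstruction has the right ingredients (Gordon--Lewis factorization of $D_\lambda$ through $L_1$, the two $\pi_1$-hypotheses, a duality/trace argument), but as written it has a genuine gap at the step you yourself flag. Concretely: the plan implicit in your step four — build a composition $D_\mu \circ (\cdots) \circ D_\lambda$ that equals a unimodular diagonal $D_\varepsilon$ and then take a supremum over $\lambda,\mu$ in the respective unit balls — does not work, because given an arbitrary sign family $\varepsilon$ with $|\varepsilon_i|\le 1$ one cannot in general split $\varepsilon_i = \lambda_i\mu_i$ with $\|\sum_i\lambda_i y_i^\ast\|_{Y^\ast}\le 1$ and $\|\sum_j\mu_j y_j\|_Y\le 1$; since $y_i^\ast(y_i)=1$ forces $\|y_i^\ast\|\,\|y_i\|\ge 1$, even a single unit coordinate can make this product $\ge 1$, so there is no normalization to take a supremum over. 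The actual route (Pisier/Sch\"utt/DeDiGaMa) fixes a unit vector $x=\sum a_i y_i$ and a unit functional $\varphi=\sum b_j y_j^\ast$, writes $\varphi(D_\varepsilon x)=\sum_i\varepsilon_i a_i b_i$ as a trace of the composition $D_b\circ D_a^\ast$ (with a diagonal $\varepsilon$-multiplier inserted on $\ell_2^n$), applies the first hypothesis with $\lambda=b$ to get $\pi_1(D_b)\le M_1$, factors $D_b$ through $L_1$ via $\mathrm{gl}(Y)$, and then uses the second hypothesis with $\mu=a$ in the resulting trace estimate. The role of $\mathrm{gl}(Y)$ is precisely to make the $L_1$-factorization available; your middle step, where you propose to ``absorb the $L_\infty\to Y^\ast$ part into a $2$-summing (hence $1$-summing up to constants) estimate,'' would smuggle in Grothendieck/Khintchine-type constants that do not appear in the clean bound $M_1M_2\,\mathrm{gl}(Y)$, so that substep as stated is not merely ``routine bookkeeping'' but would change the conclusion. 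To turn your sketch into a proof you would need to replace the $\lambda,\mu$-supremum and the $2$-summing shortcut by the trace-duality computation outlined above, with $\lambda, \mu$ determined by the coefficients of $x$ and $\varphi$ rather than being free parameters.
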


The next four lemmata show how to control these diagonal operators in case of symmetric tensor products/spaces
of $m$-homogeneous polynomials.

\begin{Lem} \label{Ab1}
Let  $X$ be a Banach space, and $(x_k)_{k=1}^n$ a 1-unconditional basis.
Then we have for all families  $(\widetilde{c}_j)_{j\in M(m,n)}$ of complex numbers that
the diagonal operator
\[
\begin{array}{lccc}
D_{\widetilde{c}}:&
\otimes_\alpha^m X
&{\longrightarrow} &
\mathcal{L} (^m \ell_\infty^n)
\\&
x_j
& \mapsto &
\big\{ (z^{(1)},\ldots,z^{(m)}) \mapsto \widetilde{c}_j \, z_{j_1}^{(1)}\cdot \ldots \cdot z_{j_m}^{(m)} \big\}
\end{array}
\]
has (operator) norm $\leq \alpha^\ast \big(\sum\limits_{j\in M(m,n)} \widetilde{c}_j x_j^\ast\big)$.
\end{Lem}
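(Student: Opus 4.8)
\textbf{Proof plan for Lemma \ref{Ab1}.}
The plan is to compute the operator norm of $D_{\widetilde c}$ directly by testing against unit vectors of $\otimes_\alpha^m X$ and pairing with the unit ball of the dual space $\mathcal{L}(^m\ell_\infty^n)^*$. First I would recall that $\mathcal{L}(^m\ell_\infty^n) = \otimes_\varepsilon^m \ell_1^n$ via the identification from \eqref{duality}, so the target space of $D_{\widetilde c}$ is $\otimes_\varepsilon^m \ell_1^n$, and its norm is the $\varepsilon$-norm, i.e. a supremum over $(z^{(1)},\dots,z^{(m)}) \in (\DD^n)^m$ of the value of the multilinear form. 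Writing a general element $u \in \otimes_\alpha^m X$ in the basis as $u = \sum_{j \in M(m,n)} u_j\, x_j$ (where $x_j = x_{j_1}\otimes\cdots\otimes x_{j_m}$), linearity gives
\[
D_{\widetilde c}(u) = \Big\{(z^{(1)},\dots,z^{(m)}) \mapsto \sum_{j \in M(m,n)} u_j\,\widetilde c_j\, z^{(1)}_{j_1}\cdots z^{(m)}_{j_m}\Big\},
\]
so $\|D_{\widetilde c}(u)\|_{\mathcal{L}(^m\ell_\infty^n)}$ is the sup over $z^{(k)}\in\DD^n$ of $|\sum_j u_j \widetilde c_j \prod_k z^{(k)}_{j_k}|$, which is exactly $\varepsilon\big(\sum_j u_j\widetilde c_j\, x_j\big)$ computed with respect to the coordinate functionals — that is, $\|D_{\widetilde c}(u)\| = \big\|\sum_j (u_j \widetilde c_j) e_j\big\|_{\otimes_\varepsilon^m \ell_1^n}$.

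Next I would bound this in terms of $\alpha(u)$. The key point is the duality pairing: since $\otimes_\varepsilon^m\ell_1^n = (\otimes_\pi^m\ell_\infty^n)^* $ and, more to the point, we want to pair $D_{\widetilde c}(u)$ against functionals of norm $\leq 1$ on $\mathcal{L}(^m\ell_\infty^n)$. Take any such functional; it can be represented (using \eqref{duality} applied to $\ell_\infty^n$, whose dual is $\ell_1^n$) by an element $w \in \otimes_{\alpha}^m X^*$ — here I use that $X$ has a $1$-unconditional basis so the coordinate structure of $X$ and $X^*$ match up, and the identification $\mathcal{L}(^m\ell_\infty^n)^* \cong \otimes_\pi^m\ell_\infty^n$. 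Then the pairing of $D_{\widetilde c}(u)$ with $w = \sum_k w_k x_k^*$ picks out $\sum_{j} u_j\, \widetilde c_j\, w_j$. Rewriting this as the pairing of $u \in \otimes_\alpha^m X$ with $\sum_j \widetilde c_j w_j\, x_j^* \in \otimes_{\alpha^*}^m X^*$, we get the bound $|\langle D_{\widetilde c}(u), w\rangle| \le \alpha(u)\cdot \alpha^*\big(\sum_j (\widetilde c_j w_j) x_j^*\big)$. Because $(x_k^*)$ is a $1$-unconditional basis of $X^*$ and $(w_k)$ ranges over the coefficients of a norm-$\le 1$ element, one shows $\alpha^*(\sum_j \widetilde c_j w_j x_j^*) \le \alpha^*(\sum_j \widetilde c_j x_j^*)$ by an unconditionality/multiplier argument (the $\varepsilon$- and $\pi$-norms of a tensor are monotone under coordinatewise multiplication by scalars of modulus $\le 1$ when the underlying bases are $1$-unconditional). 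Taking the supremum over $w$ and then over $u$ in the unit ball yields $\|D_{\widetilde c}\| \le \alpha^*\big(\sum_{j\in M(m,n)}\widetilde c_j x_j^*\big)$.

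The step I expect to be the main obstacle is the clean justification of the dualization: identifying the unit ball of $\mathcal{L}(^m\ell_\infty^n)^*$ with (a quotient of or a subset of) the unit ball of $\otimes_\pi^m\ell_\infty^n$, transporting this through the $1$-unconditional basis identifications between $X$, $X^*$, $\ell_1^n$, $\ell_\infty^n$ so that the coordinates $\widetilde c_j$ sit in the right place, and verifying the monotonicity of $\alpha^*$ under modulus-$\le 1$ coordinate multipliers. All of this is standard metric tensor product bookkeeping (the relevant facts about $\varepsilon$, $\pi$, and $1$-unconditional bases are in \cite{DefFlo}), so once the identifications are set up carefully the inequality is immediate; I would not grind through the multi-index indexing in detail but would flag that the whole point is that $D_{\widetilde c}$ is, up to these identifications, nothing but the diagonal multiplier operator with symbol $(\widetilde c_j)$, whose norm as a map between the relevant tensor products is governed by the $\alpha^*$-norm of the symbol viewed as a tensor.
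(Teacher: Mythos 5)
Your overall plan has the right shape and is close to the paper's argument: express $\|D_{\widetilde c}(u)\|$ as a supremum of evaluations, rewrite each evaluation as a duality pairing between $\otimes^m_\alpha X$ and $\otimes^m_{\alpha^*}X^*$, apply H\"older, and then absorb the $z$-factors by a multiplier bound. However, the step you flag as ``standard bookkeeping'' is in fact false as stated, and this is a genuine gap. You claim that $\alpha^*\big(\sum_j \widetilde c_j w_j x_j^*\big) \le \alpha^*\big(\sum_j \widetilde c_j x_j^*\big)$ holds for \emph{arbitrary} scalars $w_j$ of modulus $\le 1$, on the grounds that ``$\varepsilon$- and $\pi$-norms are monotone under coordinatewise multiplication by scalars of modulus $\le 1$ when the underlying bases are $1$-unconditional.'' That assertion is precisely the statement that the monomial basis $(x_j^*)_{j\in M(m,n)}$ is $1$-unconditional in $\otimes^m_{\alpha^*}X^*$ --- which is exactly what this whole paper is trying to estimate, and which is generically far from true. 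For instance, in $\otimes^2_\varepsilon \ell_2^n$ (operators on $\ell_2^n$ in operator norm), multiplying the entries of the all-ones matrix by random signs drops the operator norm from $n$ to order $\sqrt{n}$, so the unconditional basis constant of the matrix-unit basis is of order $\sqrt{n}$, not $1$. If your monotonicity claim were valid for all modulus-$\le 1$ multipliers, Theorem \ref{mainII} would read $\chi_{\mon}(\mathcal{P}(^m\ell_p^n))=1$, which is false.

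The fix is to observe that you never need arbitrary $w$ in the dual unit ball: since the target norm in $\mathcal{L}(^m\ell_\infty^n)=\otimes^m_\varepsilon\ell^n_1$ is a supremum over evaluations at elementary tensors $z^{(1)}\otimes\cdots\otimes z^{(m)}$ with each $z^{(k)}\in B_{\ell_\infty^n}$, the coefficients you must absorb always have the special product form $w_j = z^{(1)}_{j_1}\cdots z^{(m)}_{j_m}$. For such $w$, the required inequality follows from the metric mapping property of $\alpha^*$ applied to $\otimes_{k=1}^m T_{z^{(k)}}$, each factor a diagonal operator of norm $\le 1$ on $X^*$ because $(x_k^*)$ is a $1$-unconditional basis. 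This is exactly the paper's device, with the only cosmetic difference that the paper applies $T_z=\otimes_k T_{z^{(k)}}$ on the primal side (to $u$, giving $\alpha(T_z u)\le\alpha(u)$) whereas your layout puts it on the dual side; both are fine. Separately, the identification you invoke of $\mathcal{L}(^m\ell_\infty^n)^*$ with an element of ``$\otimes_\alpha^m X^*$'' is a non sequitur --- the relevant dual is $\otimes_\pi^m\ell_\infty^n$, and the role of $X^*$ is only to house the tensor $\sum_j \widetilde c_j w_j x_j^*$ once the scalar $w_j$ has been extracted --- but once you restrict to elementary $w$, this confusion disappears.
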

\begin{proof}
Define for $ z = (z^{(1)},\ldots,z^{(m)}) \in B_{\ell_\infty^n}^m$
\[
\begin{array}{lccc}
T_z := \otimes_{k=1}^m T_{z^{(k)}}:&
\otimes_\alpha^m X
&{\longrightarrow} &
\otimes_\alpha^m X
\\&
x_{j_1} \otimes \ldots \otimes x_{j_m}
& \mapsto &
(z_{j_1}^{(1)} x_{j_1}) \otimes \ldots \otimes (z_{j_m}^{(m)} x_{j_m})\,.
\end{array}
\]
Since  $(x_k)_{k=1}^n$ is a 1-unconditional basis, we know that $ \| T_z \| \leq 1\,. $
But then we obtain with the mapping property of $\alpha$  for all $z^{(1)},\ldots,z^{(m)} \in B_{\ell_\infty^n}$ that
\begin{align*}
\Big| D_{\widetilde{c}}  \Big( \D\sum\limits_{j\in M(m,n)}&  \widetilde{\lambda}_j x_j\Big) (z^{(1)},\ldots,z^{(m)})\Big|
= \Big| \D\sum\limits_{j\in M(m,n)} \widetilde{\lambda}_j \widetilde{c}_j z_{j_1}^{(1)} \cdot \ldots \cdot z_{j_m}^{(m)}\Big| \\\\&
\leq  \alpha\, \Big( \D\sum\limits_{j\in M(m,n)} (\widetilde{\lambda}_j z_{j_1}^{(1)}\cdot \ldots z_{j_m}^{(m)})\, x_{j} \Big)
\,\alpha^\ast\, \Big( \D\sum\limits_{j\in M(m,n)} \widetilde{c}_j x_{j}^{ \ast}\Big) \\ &
= \alpha\, \Big( T_z (\D\sum\limits_{j\in M(m,n)} \widetilde{\lambda}_j x_j)\Big) \,
     \alpha^\ast \, \Big( \D\sum\limits_{j\in M (m,n)} \widetilde{c}_j x_j^\ast\Big)\\&
     \leq \alpha\, \Big(\D\sum\limits_{j\in M(m,n)} \widetilde{\lambda}_j x_j) \,
     \alpha^\ast \, \Big( \D\sum\limits_{j\in M (m,n)} \widetilde{c}_j x_j^\ast\Big)\,,
\end{align*}
which clearly implies as desired that $\| D_{\widetilde{c}}  \| \leq \alpha^\ast \, \Big( \D\sum\limits_{j\in M (m,n)} \widetilde{c}_j x_j^\ast\Big)\,.$
\end{proof}
We proceed with a symmetric version of this lemma.
\begin{Lem} \label{Ab2}
Let $X$ be a Banach space with a 1-unconditional basis $(x_k)_{k=1}^n$. Then for every family
$(c_i)_{i\in J(m,n)}$ of complex numbers  the diagonal operator
\[
\begin{array}{lccc}
D_c:&
\otimes_{\alpha_s}^{m,s} X
&{\longrightarrow} &
\mathcal{L}_s (^m \ell_\infty^n)
\\&
 S(x_i)
& \mapsto &
\big\{ (z^{(1)},\ldots,z^{(m)})  \mapsto   c_i \D\frac{1}{|i|} \D\sum\limits_{j\in[i]}\,
z_{j_1}^{(1)}\cdot \ldots \cdot z_{j_m}^{(m)} \big\}
\end{array}
\]
has norm
$\leq  \D\frac{m^m}{m!} \, \alpha_s^\ast \, \Big( \sum\limits_{i\in J(m,n)} c_i |i| S(x_i^\ast)\Big)$.
\end{Lem}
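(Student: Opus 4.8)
The plan is to factor the symmetric diagonal operator $D_c$ through the non-symmetric one provided by Lemma~\ref{Ab1}. First I would extend the coefficient family from $J(m,n)$ to $M(m,n)$, setting $\widetilde{c}_j:=c_i$ whenever $[j]=[i]$ with $i\in J(m,n)$; by construction $\widetilde{c}$ is constant on every equivalence class. Applying Lemma~\ref{Ab1} to $(\widetilde{c}_j)_{j\in M(m,n)}$ produces a diagonal operator $D_{\widetilde{c}}\colon\otimes^m_\alpha X\to\mathcal{L}(^m\ell_\infty^n)$ with $\|D_{\widetilde{c}}\|\le\alpha^\ast\bigl(\sum_{j\in M(m,n)}\widetilde{c}_j x_j^\ast\bigr)$. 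Grouping $M(m,n)$ into its equivalence classes and invoking Lemma~\ref{pony} for the dual basis, $S(x_i^\ast)=\tfrac{1}{|i|}\sum_{j\in[i]}x_j^\ast$, one rewrites this majorant as $\alpha^\ast\bigl(\sum_{i\in J(m,n)}c_i\,|i|\,S(x_i^\ast)\bigr)$.

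The second step is to identify $D_c$ with $D_{\widetilde{c}}$ precomposed with the canonical inclusion $\iota\colon\otimes^{m,s}_{\alpha_s}X\hookrightarrow\otimes^m_\alpha X$. It suffices to check this on the basis $(S(x_i))_{i\in J(m,n)}$ of $\otimes^{m,s}X$ (Lemma~\ref{pony}): there $\iota(S(x_i))=\tfrac{1}{|i|}\sum_{j\in[i]}x_j$, so by linearity of $D_{\widetilde{c}}$ together with $\widetilde{c}_j=c_i$ for $j\in[i]$,
\[
D_{\widetilde{c}}\bigl(\iota(S(x_i))\bigr)=\frac{1}{|i|}\sum_{j\in[i]}\Bigl\{(z^{(1)},\dots,z^{(m)})\mapsto c_i\,z^{(1)}_{j_1}\cdots z^{(m)}_{j_m}\Bigr\}=D_c(S(x_i)),
\]
and every such form is symmetric, so the image of $D_{\widetilde{c}}\circ\iota$ lies in $\mathcal{L}_s(^m\ell_\infty^n)$, as needed. (Whichever of the two customary norms one puts on $\mathcal{L}_s(^m\ell_\infty^n)$, it is dominated by the norm induced from $\mathcal{L}(^m\ell_\infty^n)$, so estimating $D_c$ as a map into the latter is enough.)

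It remains to estimate $\|D_{\widetilde{c}}\|$ and $\|\iota\|$, and this is the only delicate point: a careless bound would cost $(m^m/m!)^2$ rather than the asserted $m^m/m!$. The remedy is that for the two admissible choices of $\alpha$ the polarization loss sits in different factors. If $\alpha=\pi$, then \eqref{pol2} gives $\pi(u)\le\pi_s(u)$ for symmetric $u$, hence $\|\iota\|\le1$, while $\varepsilon\bigl(\sum_{i}c_i|i|S(x_i^\ast)\bigr)\le\tfrac{m^m}{m!}\,\varepsilon_s\bigl(\sum_{i}c_i|i|S(x_i^\ast)\bigr)$ by \eqref{pol1}. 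If $\alpha=\varepsilon$, then \eqref{pol1} gives $\|\iota\|\le\tfrac{m^m}{m!}$, whereas $\pi\bigl(\sum_{i}c_i|i|S(x_i^\ast)\bigr)\le\pi_s\bigl(\sum_{i}c_i|i|S(x_i^\ast)\bigr)$ by \eqref{pol2}. Either way,
\[
\|D_c\|\le\|D_{\widetilde{c}}\|\,\|\iota\|\le\frac{m^m}{m!}\,\alpha_s^\ast\Bigl(\sum_{i\in J(m,n)}c_i\,|i|\,S(x_i^\ast)\Bigr),
\]
which is the assertion. Thus the whole difficulty is the bookkeeping of the polarization constants; the rest is the routine identification of a diagonal operator with a composition.
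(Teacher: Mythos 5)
Your proof is correct and follows essentially the same route as the paper: both reduce to Lemma \ref{Ab1} with the class-constant extension $\widetilde{c}_j=c_i$, use Lemma \ref{pony} to rewrite the majorant as $\alpha^\ast\bigl(\sum_i c_i|i|S(x_i^\ast)\bigr)$, and then apply \eqref{pol1} and \eqref{pol2} so that the single polarization factor $m^m/m!$ is absorbed into whichever of $\alpha$, $\alpha^\ast$ is the injective one. Your presentation is a touch more explicit than the paper's (which does the whole estimate in one inline chain and just cites ``\eqref{pol1} and \eqref{pol2}''), since you isolate the inclusion $\iota\colon\otimes^{m,s}_{\alpha_s}X\hookrightarrow\otimes^m_\alpha X$ and track case by case where the factor $m^m/m!$ lands — a helpful clarification of why the bound is $m^m/m!$ and not its square.
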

\begin{proof}
Take $\sum\limits_{i \in J(m,n)} \lambda_i S(x_i) \in \otimes^{m,s} X$ , and apply the preceding Lemma
to  $ \widetilde{\lambda}_j := \frac{\lambda_i}{|i|} $ and $\widetilde{c}_j := c_i, j \in [i]$. Then
\begin{align*}
\Big| \big[ D_c  \big( \D\sum\limits_{i\in J(m,n)}&  \lambda_i S(x_i)\big]\, (z^{(1)},\ldots,z^{(m)})\Big| \\\\[-3ex] &
 = \Big| \D\sum\limits_{i\in J(m,n)} \lambda_i c_i \, \D\frac{1}{|i|} \, \D\sum\limits_{j\in[i]}
      z_{j_1}^{(1)} \cdot \ldots \cdot z_{j_m}^{(m)}\Big| \\&
      =  \Big| \D\sum\limits_{i\in J(m,n)} \, \D\sum\limits_{j\in[i]}\, \D\frac{\lambda_i}{|i|}\,
      c_i z_{j_1}^{(1)} \cdot \ldots \cdot z_{j_m}^{(m)}\Big| \\ &
          \leq\, \alpha \, \Big( \D\sum\limits_{i\in J(m,n)}\,  \D\sum\limits_{j\in[i]}
      \D\frac{ \lambda_i }{|i|}\, x_j\Big)  \alpha^\ast \Big( \D\sum\limits_{i\in J(m,n)} \, \D\sum\limits_{j\in[i]}\,
      c_i x_j^\ast\Big) \\&  =\, \alpha \, \Big( \D\sum\limits_{i\in J(m,n)} \lambda_i S(x_i)\Big) \,
      \alpha^\ast\, \Big( \D\sum\limits_{i\in J(m,n)} c_i |i| S(x_i^\ast)\Big) \\&
          \leq \,  \D\frac{m^m}{m!} \, \alpha_s \, \Big( \D\sum\limits_{i\in J(m,n)} \lambda_i S(x_i)\Big)\,
          \alpha_s^\ast \, \Big( \D\sum\limits_{i\in J(m,n)} c_i |i| S(x_i^\ast)\Big)\,,
\end{align*}
where the latter inequality follows from \eqref{pol1} and \eqref{pol2}.
\end{proof}
The last lemma  needed for the proof of Proposition \ref{Gordon-Lewis-Equi} is an immediate consequence of the preceding one and our fundamental estimate from Lemma \ref{absolut}.
\begin{Lem} \label{Ab3}
Let $(x_k)_{k=1}^n$ be a 1-unconditional basis of the Banach space $X$. Then for every family
$(c_i)_{i\in J(m,n)}$ of complex numbers the diagonal operator
\[
\begin{array}{lccc}
D_c:&
\otimes_{\alpha_s}^{m,s}\, X
&{\longrightarrow} &
\ell_2 \big( J(m,n)\big)
\\&
\D\sum\limits_{i\in J(m,n)} \lambda_i S(x_i)
& \mapsto &
(\lambda_i c_i)_{i\in J(m,n)}
\end{array}
\]
satisfies
\[
\pi_1 \, (D_c) \leq  \D\frac{m^m}{m!}\, \sqrt{2}^m \, \alpha_s^\ast\,
\Big( \D\sum\limits_{i\in J(m,n)} c_i |i| \, S(x_i^\ast)\Big).
\]
\end{Lem}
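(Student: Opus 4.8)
The plan is to derive Lemma \ref{Ab3} as a composition of the diagonal operator from Lemma \ref{Ab2} with the map $T$ from Lemma \ref{absolut}, so that the $1$-summing norm is bounded by the product of an operator norm and a $1$-summing norm. First I would recall the general principle that for operators $U \colon Z \to W$ and $V \colon W \to \ell_2$ one has $\pi_1(V \circ U) \leq \|U\|\, \pi_1(V)$, which is immediate from the definition of $\pi_1$ (each $\|V U z_i\| \leq \|V U\| \cdots$, but more precisely one tests $V$ on the images $U z_i$, noting $\|\sum \lambda_i U z_i\| \leq \|U\| \|\sum \lambda_i z_i\|$).

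Next I would set up the factorization explicitly. Consider the diagonal operator
\[
D_c^{\mathcal L} \colon \otimes_{\alpha_s}^{m,s} X \longrightarrow \mathcal{L}_s(^m \ell_\infty^n) = \mathcal{P}(^m \ell_\infty^n)
\]
from Lemma \ref{Ab2}, which sends $S(x_i) \mapsto \{ z \mapsto c_i \frac{1}{|i|} \sum_{j \in [i]} z_{j_1} \cdots z_{j_m}\}$; by that lemma its norm is $\leq \frac{m^m}{m!}\, \alpha_s^*(\sum_{i \in J(m,n)} c_i |i| S(x_i^*))$. Observe that the image of $\sum_i \lambda_i S(x_i)$ under $D_c^{\mathcal L}$, written as an $m$-homogeneous polynomial in the variables via the identification $z^\alpha = S(e_j)$, has coefficient vector exactly $(\lambda_i c_i)_{i \in J(m,n)}$ — this is the content of the relation $S(x_i) = \frac{1}{|i|}\sum_{j\in[i]} x_j$ from Lemma \ref{pony} together with the correspondence between $J(m,n)$ and $\Lambda(m,n)$. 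Therefore the operator $D_c$ of the present lemma factors as $D_c = T \circ D_c^{\mathcal L}$, where $T \colon \mathcal{P}(^m \ell_\infty^n) \to \ell_2^{\binom{n+m-1}{n-1}} = \ell_2(J(m,n))$ is the coefficient map of \eqref{T}.

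Finally I would combine the bounds: by Lemma \ref{absolut}, $\pi_1(T) \leq \sqrt{2}^m$, so
\[
\pi_1(D_c) = \pi_1(T \circ D_c^{\mathcal L}) \leq \|D_c^{\mathcal L}\|\, \pi_1(T) \leq \frac{m^m}{m!}\, \alpha_s^*\Big(\sum_{i \in J(m,n)} c_i |i|\, S(x_i^*)\Big) \cdot \sqrt{2}^m\,,
\]
which is precisely the claimed estimate. The main point requiring care — and the only genuine obstacle — is verifying that $T \circ D_c^{\mathcal L}$ really equals $D_c$, i.e.\ correctly tracking how the monomial $z^\alpha$ corresponds to $S(e_j)$ and how the factor $\frac{1}{|i|}\sum_{j \in [i]}$ in Lemma \ref{Ab2} collapses to give coefficient $\lambda_i c_i$ in the variable representation; this is bookkeeping with the index sets $M(m,n)$, $J(m,n)$, $\Lambda(m,n)$ and the normalization $|j| = m!/\alpha!$, all of which is set up in Section 2. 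Everything else is the elementary ideal-property inequality for $\pi_1$.
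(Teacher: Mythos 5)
Your proof is correct and is exactly the paper's argument: the paper states that Lemma \ref{Ab3} is "an immediate consequence of the preceding one [Lemma \ref{Ab2}] and our fundamental estimate from Lemma \ref{absolut}," which is precisely the factorization $D_c = T \circ D_c^{\mathcal L}$ combined with the ideal property $\pi_1(T \circ D_c^{\mathcal L}) \leq \pi_1(T)\,\|D_c^{\mathcal L}\|$. The only point worth making explicit is that the identification $\mathcal{L}_s(^m \ell_\infty^n) \to \mathcal{P}(^m \ell_\infty^n)$ (restriction to the diagonal) has norm $\leq 1$, so it does not affect the bound, and that $\sum_{j\in[i]} z_{j_1}\cdots z_{j_m} = |i|\, z^{\alpha(i)}$, which cancels the $\frac{1}{|i|}$ in Lemma \ref{Ab2} and yields coefficient $\lambda_i c_i$ for the monomial $z^{\alpha(i)}$.
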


Note now finally that Lemma \ref{JFA}, the preceding Lemma \ref{Ab3} and Lemma \ref{pony} together yield
\[
\begin{array}{l}
\chi \big( (S(x_i)\big)_{i\in J(m,n)} ; \otimes_{\alpha_s}^{m,s}\, X\big)
\leq \left( \D\frac{m^m}{m!}\right)^2\, 2^m {\rm gl} \, ( \otimes_{\alpha_s}^{m,s}\, X)\,,
\end{array}
\]
which  completes {\bf the proof of Proposition \ref{Gordon-Lewis-Equi}} (which was nothing else than
a tensor product formulation of the main result of this section, Proposition \ref{maingole}).

\section{Gordon-Lewis constants and projection constants}

Recall that the projection constant of a finite dimensional Banach space $X$ is defined to be
\[
\lambda (X) = \sup\{\lambda(I(X),Z )\,:\, I: X\hookrightarrow Z \;\; \text{an
isometric embedding into} \;\;Z\}\,,
\]
where for a subspace $Y$ of a Banach space $Z$ the relative projection constant
$\lambda(Y,Z)$ is the infimum of all $\|P\| $ taken with respect to all projections
$P$ onto $Z$. We  will use  the well known estimates (see \cite[9.12]{Tom}).
\begin{equation} \label{dsds}
\lambda (X) \leq \sqrt{\dim X}\,,
\end{equation}
and also the obvious fact that
\begin{equation} \label{dsdsds}
\lambda (X) \leq d(X,Y)\, \lambda (Y)\,,
\end{equation}
The main purpose of this section is to prove the following proposition which in combination with
Proposition \ref{maingole} allows to estimate unconditional basis constants of symmetric tensor products/spaces of $m$-
homogeneous polynomials.

\begin{Prop} \label{golproj}
Let $ X $ be a Banach space  with a 1-unconditional basis $(x_k)_{k=1}^n$.
Then for every $ m \geq 2$ we have
\begin{itemize}
\item[(1)]
$ {\rm gl} \big( \otimes_{\varepsilon}^{m,s} X \big) \leq 2  \lambda \big(  \otimes^{m-1,s}_\varepsilon X  \big) $
\item[(2)]
$ {\rm gl} \big( \otimes_{\varepsilon_s}^{m,s} X \big) \leq 2 \big(\frac{m^m}{m!}\big)^2 \lambda
\big(  \otimes^{m-1,s}_{\varepsilon_s} X  \big)\,. $
\end{itemize}
\end{Prop}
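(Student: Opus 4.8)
The plan is to prove both statements simultaneously by showing that any $1$-summing operator $T: \otimes^{m,s}_\alpha X \to \ell_2$ factors through an $L_1$-space, with the factorization norm controlled by a projection constant of the $(m-1)$-fold symmetric tensor product. Since $\alpha = \varepsilon$ and $\alpha_s=\varepsilon_s$ are the relevant norms, and since $\mathrm{gl}(Y)$ is by definition the best constant $c$ with $\gamma_1(T)\le c\,\pi_1(T)$, it suffices to produce, for each such $T$, a factorization $T = S\circ R$ with $R: \otimes^{m,s}_\alpha X \to L_1(\mu)$ and $S: L_1(\mu)\to \ell_2$ satisfying $\|R\|\,\|S\| \le 2\,(\tfrac{m^m}{m!})^{2\delta}\,\lambda(\otimes^{m-1,s}_\alpha X)\,\pi_1(T)$, where $\delta=0$ in case (1) and $\delta=1$ in case (2).

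The key idea is the standard trick from \cite{PI78,Schutt,DeDiGaMa}: view $\otimes^{m,s}_\varepsilon X$ inside the full tensor product $\otimes^m_\varepsilon X$, which in turn embeds isometrically into $\ell_1^n \otimes_\varepsilon (\otimes^{m-1}_\varepsilon X)$ (or the symmetric analogue). First I would use the isometric duality $\otimes^{m,s}_{\varepsilon_s} X = (\otimes^{m,s}_{\pi_s} X^*)^*$ from \eqref{duality} and the fact that, for a space with a $1$-unconditional basis, the $e_k$'s are also $1$-unconditional in the dual, so the relevant symmetrization projectors have the norms recorded in \eqref{pol1}, \eqref{pol2} and in the paragraph following \eqref{pol2}. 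The crucial step is to exploit the identification $\ell_1^n \otimes_\pi Y = \ell_1^n(Y)$: a $1$-summing operator out of an injective tensor product, when tensored with the identity on $\ell_1^n$, lands in a vector-valued $\ell_1$-space, which is an $L_1$-space. More precisely, I would write the given $T$ on $\otimes^{m,s}_\varepsilon X \hookrightarrow \ell_1^n\otimes_\varepsilon(\otimes^{m-1,s}_\varepsilon X)$ and use the isometric embedding $\otimes^{m-1,s}_\varepsilon X \hookrightarrow Z$ witnessing the projection constant $\lambda(\otimes^{m-1,s}_\varepsilon X)$, together with a norm-$\lambda(\otimes^{m-1,s}_\varepsilon X)$ projection $P: Z \to \otimes^{m-1,s}_\varepsilon X$. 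Composing with the inclusion $\ell_1^n\otimes_\varepsilon Z \to \ell_1^n\otimes_\pi Z = \ell_1^n(Z)$, one obtains a map into an $L_1$-space of norm at most $\lambda(\otimes^{m-1,s}_\varepsilon X)$, and then $P\otimes\mathrm{id}$ followed by $T$ (composed with the $\ell_1^n$-contraction coming from $\pi_1$, cf.\ \eqref{ten}) gives the desired factorization through $L_1$. The extra factor $2$ comes from the Gordon-Lewis inequality \eqref{golem}, and in case (2) the two factors of $\tfrac{m^m}{m!}$ enter because passing between $\varepsilon_s$ and the $\varepsilon$-norm induced from the full tensor product costs one such factor on each side (once for the embedding and once for its complement), exactly as in \eqref{pol1}.

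I expect the main obstacle to be the bookkeeping of the symmetric structure: ensuring that when one passes from $\otimes^{m,s}_{\varepsilon_s}X$ to its realization inside $\ell_1^n \otimes_\varepsilon (\otimes^{m-1,s}_{\varepsilon_s}X)$, the symmetrization projector $S_m$ is applied with the correct norm bounds and that the resulting $L_1$-factorization genuinely factors the \emph{original} operator $T$ rather than a perturbation of it. The full-tensor case (1) is cleaner because all the symmetrization projectors are norm-$1$ for $\alpha=\varepsilon$, so the constant $2\lambda(\otimes^{m-1,s}_\varepsilon X)$ drops out directly from the $L_1$-factorization argument plus \eqref{golem}; case (2) then follows by the same route after inserting the polarization constants from \eqref{pol1}, \eqref{pol2} at the two points where the injective norm on the full tensor product is compared with $\varepsilon_s$. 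Throughout I would invoke \eqref{golemmazur} and \eqref{dsdsds} only implicitly, since here the relevant comparison is between the symmetric tensor power and its own ambient space rather than between two abstractly isomorphic spaces.
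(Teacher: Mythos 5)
The high-level intuition in your proposal is reasonable — reduce the Gordon--Lewis constant of the degree-$m$ space to a projection constant of the degree-$(m-1)$ space, pick up a factor of $2$ from the Gordon--Lewis inequality applied to a space with a $1$-unconditional basis, and in case (2) pay two polarization factors $\tfrac{m^m}{m!}$ — but the central step of your argument, as written, does not hold, and the route you describe is not the one the paper actually takes.

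The key error is the claimed embedding
$\otimes^{m,s}_\varepsilon X \hookrightarrow \ell_1^n \otimes_\varepsilon (\otimes^{m-1,s}_\varepsilon X)$.
For a general $X=(\C^n,\|\cdot\|)$ with $1$-unconditional basis (and in the applications $X = \ell_q^n$ with $q$ arbitrary), the correct ambient space is $X\otimes_\varepsilon (\otimes^{m-1,s}_\varepsilon X)$, which in turn sits isometrically in $\otimes^m_\varepsilon X$; there is no $\ell_1^n$ anywhere. You seem to have conflated the auxiliary $\ell_1^n$ appearing in the tensor characterization \eqref{ten} of the $\pi_1$-norm with the left-hand factor of the tensor product, and consequently the identification $\ell_1^n\otimes_\pi Z=\ell_1^n(Z)$ does not give you an $L_1$-factorization of the given operator $T$: tensoring $T$ with $\mathrm{id}_{\ell_1^n}$ bounds an operator norm, it does not produce a map $R:\otimes^{m,s}_\varepsilon X\to L_1(\mu)$ through which $T$ factors. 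Moreover the inclusion $\ell_1^n\otimes_\varepsilon Z\to \ell_1^n\otimes_\pi Z$ is not norm-bounded by the projection constant as you assert.

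What the paper does instead avoids any explicit $L_1$-factorization: it shows (Lemma \ref{mainproj}) that
${\rm gl}(\otimes^{m+1,s}_\varepsilon X)\le \lambda(\otimes^{m,s}_\varepsilon X)\,\sup_N {\rm gl}(X\otimes_\varepsilon\ell_\infty^N)$
by (i) embedding $\otimes^{m,s}_\varepsilon X$ almost isometrically into $\ell_\infty^N$, complementing it there with a projection of norm $\approx \lambda(\otimes^{m,s}_\varepsilon X)$, and tensoring this projection with $\mathrm{id}_X$ to complement $X\otimes_\varepsilon(\otimes^{m,s}_\varepsilon X)$ inside $X\otimes_\varepsilon\ell_\infty^N$; and (ii) observing that the symmetrization $S_{m+1}$ is a norm-$1$ projection from $X\otimes_\varepsilon(\otimes^{m,s}_\varepsilon X)$ onto $\otimes^{m+1,s}_\varepsilon X$. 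Since $X\otimes_\varepsilon\ell_\infty^N$ has $1$-unconditional basis, \eqref{golem} gives ${\rm gl}(X\otimes_\varepsilon\ell_\infty^N)\le 2$, yielding (1). Part (2) then follows by two applications of Banach--Mazur comparison: once for ${\rm gl}$ via \eqref{golemmazur} and once for $\lambda$ via \eqref{dsdsds}, each costing $\tfrac{m^m}{m!}$. Your explanation of the square — ``once for the embedding and once for its complement'' — is not quite the mechanism; the two factors arise because you pass from $\varepsilon_s$ to $\varepsilon$ at the level of ${\rm gl}$ on the $m$-fold power, and from $\varepsilon$ back to $\varepsilon_s$ at the level of $\lambda$ on the $(m-1)$-fold power.

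To repair your proof you would need to replace the $\ell_1^n$/$\pi$-tensor step with the $\ell_\infty^N$/projection-constant step described above, and then the rest of your outline (the factor $2$ from \eqref{golem}, the polarization bookkeeping in (2)) goes through.
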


Note that the projection constant of the polynomials appears with degree $m-1$ whereas the Gordon-Lewis constant
is taken with respect to all polynomials of degree $m$. The trick which makes this possible is isolated in the
following lemma.

\begin{Lem} \label{mainproj}
Let $X $ be a finite dimensional Banach space and $m \in \mathbb{N}$. Then

\[
{\rm gl} \big( \otimes_{\varepsilon}^{m+1,s} X \big) \leq \sup_N{\rm gl}
\big(X \otimes_\varepsilon \ell_\infty^N ) \lambda \big(  \otimes^{m,s}_\varepsilon X  \big).
\]

\end{Lem}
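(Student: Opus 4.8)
The plan is to exploit the standard fact that a Gordon-Lewis estimate survives passing to a $1$-complemented subspace, so the whole point is to realize $\otimes^{m+1,s}_\varepsilon X$ (or a space isometric to it) as a norm-$1$ complemented subspace of a tensor product of the form $\bigl(X\otimes_\varepsilon\ell_\infty^N\bigr)\otimes_\varepsilon\bigl(\otimes^{m,s}_\varepsilon X\bigr)$, after which one uses two general principles: first, $\mathrm{gl}(Y\otimes_\varepsilon Z)\le \mathrm{gl}(Y)\,\lambda(Z)$ for finite-dimensional $Y,Z$ (a $1$-summing operator out of $Y\otimes_\varepsilon Z$ can be lifted to $Y\otimes_\varepsilon W$ for $Z\hookrightarrow W$ with control $\lambda(Z)$, then the $\varepsilon$-tensor structure lets the $L_1$-factorization on the $Y$-variable be performed coordinatewise); and second, $\mathrm{gl}$ does not increase under $1$-complemented subspaces. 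Combining these gives $\mathrm{gl}(\otimes^{m+1,s}_\varepsilon X)\le \mathrm{gl}(X\otimes_\varepsilon\ell_\infty^N)\,\lambda(\otimes^{m,s}_\varepsilon X)$ for a suitable $N$, and taking the supremum over $N$ yields the claim.

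Concretely, I would proceed as follows. First, fix a basis $(x_k)_{k=1}^n$ of $X$ and recall from Lemma \ref{pony} that $\otimes^{m+1,s}X$ has basis $(S(x_j))_{j\in J(m+1,n)}$. Write $j=(j_0,j_1,\dots,j_m)$ and think of $S(x_j)$ as built from a first slot carrying $x_{j_0}\in X$ and the remaining $m$ slots carrying $S(x_{(j_1,\dots,j_m)})\in\otimes^{m,s}X$; formally, $\otimes^{m+1,s}_\varepsilon X$ embeds isometrically into $X\otimes_\varepsilon(\otimes^{m,s}_\varepsilon X)$, exactly as in Step 2 of the proof of Theorem \ref{mainresult}. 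The subtlety is that this embedding is \emph{not} $1$-complemented in $X\otimes_\varepsilon(\otimes^{m,s}_\varepsilon X)$ by the obvious symmetrization (the norm of $S_{m+1}$ as a projector on the full tensor product in general is not $1$ in a way compatible with the ``one slot versus $m$ slots'' splitting). This is where the auxiliary $\ell_\infty^N$ enters: by choosing $N$ large (e.g. $N=\dim\otimes^{m,s}X$, or $N$ indexing a norming set of functionals), one can dilate $\otimes^{m,s}_\varepsilon X$ isometrically inside $\ell_\infty^N$, so that $X\otimes_\varepsilon(\otimes^{m,s}_\varepsilon X)$ sits isometrically in $X\otimes_\varepsilon\ell_\infty^N$, and inside this bigger ambient space one \emph{can} arrange a norm-$1$ averaging projection onto (an isometric copy of) $\otimes^{m+1,s}_\varepsilon X$ using the symmetry of all $m+1$ slots. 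So the structural claim to nail down is: $\otimes^{m+1,s}_\varepsilon X$ is norm-$1$ complemented in $(X\otimes_\varepsilon\ell_\infty^N)\otimes_\varepsilon(\otimes^{m,s}_\varepsilon X)$ for an appropriate $N$.

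Granting that, the estimate chains together: $\mathrm{gl}(\otimes^{m+1,s}_\varepsilon X)\le \mathrm{gl}\bigl((X\otimes_\varepsilon\ell_\infty^N)\otimes_\varepsilon(\otimes^{m,s}_\varepsilon X)\bigr)\le \mathrm{gl}(X\otimes_\varepsilon\ell_\infty^N)\,\lambda(\otimes^{m,s}_\varepsilon X)$, and then one passes to $\sup_N$. For the middle inequality one uses: if $T\colon Y\otimes_\varepsilon Z\to\ell_2$ is $1$-summing, embed $Z$ isometrically into some $C(K)$ (or $\ell_\infty^M$) with a norm-$\lambda(Z)$ projection $P$; then $\mathrm{id}_Y\otimes P$ has norm $\le\lambda(Z)$, and on $Y\otimes_\varepsilon C(K)\cong C(K;Y)$ a $1$-summing operator to $\ell_2$ factors through $L_1$ by the standard Gordon-Lewis mechanism for $C(K;Y)$ with constant controlled by $\mathrm{gl}(Y\otimes_\varepsilon C(K))=\mathrm{gl}(Y\otimes_\varepsilon\ell_\infty^N)$ in the limit; composing with $\mathrm{id}_Y\otimes P$ returns a factorization of $T$.

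I expect the \textbf{main obstacle} to be precisely the complementation step: verifying that after inflating one factor to $\ell_\infty^N$ the symmetrization $S_{m+1}$ (or a modified averaging operator built from it) becomes a norm-$1$ projection from $(X\otimes_\varepsilon\ell_\infty^N)\otimes_\varepsilon(\otimes^{m,s}_\varepsilon X)$ onto a subspace isometric to $\otimes^{m+1,s}_\varepsilon X$. The injective tensor norm is friendly here because norm-$1$ operators on the factors tensor to a norm-$1$ operator, and for $\alpha=\varepsilon$ the symmetrization is already a norm-$1$ projector (this is recorded in the excerpt just after \eqref{pol2}); the work is in bookkeeping the identification of the first slot (landing in $X\otimes_\varepsilon\ell_\infty^N$ but really only using the $X$-part) with one of the $m+1$ symmetric slots, and in checking that no constant worse than $1$ is lost in this identification. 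Once that is set up, everything else is the two soft ``gl behaves well'' lemmas above plus taking a supremum.
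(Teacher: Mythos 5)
Your plan has the right two soft ingredients (gl passes to $1$-complemented subspaces; the $\varepsilon$-tensor product behaves well under tensoring with a projection onto a $\lambda$-complemented subspace of $\ell_\infty^N$), but it is built around a claimed subtlety that does not exist, and this creates a genuine gap. You assert that the obvious symmetrization does \emph{not} give a norm-$1$ projection from $X\otimes_\varepsilon(\otimes^{m,s}_\varepsilon X)$ onto $\otimes^{m+1,s}_\varepsilon X$, and you therefore route the argument through an inflated ambient space $(X\otimes_\varepsilon\ell_\infty^N)\otimes_\varepsilon(\otimes^{m,s}_\varepsilon X)$, flagging the complementation there as the ``main obstacle'' which you do not actually resolve. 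That claim is false. Because $\varepsilon$ respects subspaces, $X\otimes_\varepsilon(\otimes^{m,s}_\varepsilon X)$ embeds isometrically into $X\otimes_\varepsilon(\otimes^m_\varepsilon X)=\otimes^{m+1}_\varepsilon X$; the symmetrization $S_{m+1}$ is a norm-$1$ projector on $\otimes^{m+1}_\varepsilon X$; and its range $\otimes^{m+1,s}X=\mathrm{span}\{\otimes^{m+1}x\}$ is contained in $X\otimes(\otimes^{m,s}X)$. Hence $S_{m+1}$ restricted to the subspace $X\otimes_\varepsilon(\otimes^{m,s}_\varepsilon X)$ is already a norm-$1$ projection onto $\otimes^{m+1,s}_\varepsilon X$, and $\mathrm{gl}(\otimes^{m+1,s}_\varepsilon X)\le\mathrm{gl}(X\otimes_\varepsilon(\otimes^{m,s}_\varepsilon X))$ follows with no $\ell_\infty^N$ needed for this step.

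The role of $\ell_\infty^N$ is entirely in the other half of the argument: pick an almost-isometric embedding of $\otimes^{m,s}_\varepsilon X$ into some $\ell_\infty^N$ admitting a projection $P$ with $\|P\|\le\lambda(\otimes^{m,s}_\varepsilon X)+\varepsilon$; then $\mathrm{id}_X\otimes P$ is a projection of the same norm from $X\otimes_\varepsilon\ell_\infty^N$ onto the copy of $X\otimes_\varepsilon(\otimes^{m,s}_\varepsilon X)$, so $\mathrm{gl}(X\otimes_\varepsilon(\otimes^{m,s}_\varepsilon X))\le\lambda(\otimes^{m,s}_\varepsilon X)\sup_N\mathrm{gl}(X\otimes_\varepsilon\ell_\infty^N)$, and combining with the complementation step gives the lemma. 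So your detour through $(X\otimes_\varepsilon\ell_\infty^N)\otimes_\varepsilon(\otimes^{m,s}_\varepsilon X)$ is unnecessary, and since the key step you flag there is left unjustified, the proposal as written is incomplete; once you delete the imaginary obstruction and apply $S_{m+1}$ directly, the rest of your plan matches the actual proof.
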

\begin{proof}
Step 1.  Let $\varepsilon > 0 $ be arbitrary.
We map $ \otimes^{m,s}_\varepsilon X $ onto a subspace $ Y $ of $\ell_\infty^N$ with
$ d(\otimes^{m,s}_\varepsilon X , Y) \leq 1 + \varepsilon $ such that there is a projector
$P : \ell_\infty^N \to \ell_\infty^N $ onto this subspace with $\| P \|
\leq \lambda (Y) + \varepsilon $. Then
$$ id \otimes P : X \otimes_\varepsilon \ell_\infty^N  \to X \otimes_\varepsilon  Y $$
is a projector with the same norm. Hence $ {\rm gl} (X \otimes_\varepsilon Y ) \leq \| P \| {\rm gl}
\big(X \otimes_\varepsilon \ell_\infty^N ) $. Since $\varepsilon > 0 $ was arbitrary,
we get $$ {\rm gl} \big(X \otimes_\varepsilon (\otimes_\varepsilon^{m,s} X) \big) \leq
 \lambda \big(   \otimes^{m,s}_\varepsilon X     \big)\sup_N {\rm gl} \big(X \otimes_\varepsilon \ell_\infty^N \big).$$
\\
Step 2.
Since the injective norm respects isometric subspaces, $ X \otimes_\varepsilon
(\otimes_\varepsilon^{m,s} X) $ is an isometric subspace of
$ X \otimes_\varepsilon (\otimes_\varepsilon^m X) = \otimes_\varepsilon^{m+1} X$.
Because of
\begin{align*}
\otimes^{m+1, s} X  = & \,\,\text{span}\{ \otimes^{m+1} x : x \in X   \} \\\\&
\subset \text{span}\{ y\otimes (\otimes^m x)
 : y,x \in X  \} = X \otimes (\otimes^{m,s} X)\,,
\end{align*}
we see that $ \otimes_\varepsilon^{m+1,s} X$ is an isometric subspace of
$X \otimes_\varepsilon (\otimes^{m,s}_\varepsilon X) $. Consider now the norm $1$ projection
$$ S_{m+1} :  \otimes_\varepsilon^{m+1} X \to \otimes_\varepsilon^{m+1} X $$
onto $\otimes_\varepsilon^{m+1,s} X$ (see \eqref{pol1}). Clearly, if this map is restricted to  $ X \otimes_\varepsilon (\otimes^{m,s}_\varepsilon X)$, then
we obtain a norm $1$ projection $  X \otimes_\varepsilon (\otimes^{m,s}_\varepsilon X) \to  X \otimes_\varepsilon(\otimes^{m,s}_\varepsilon X) $
onto $\otimes_\alpha^{m+1,s} X $. This finally implies
$$ {\rm gl} \big( \otimes_{\varepsilon}^{m+1,s} X \big)
\leq {\rm gl} \big( X \otimes_\alpha (\otimes_{\varepsilon}^{m,s} X) \big) $$
which together with Step 1 leads to the conclusion.
\end{proof}

Now {\bf the proof of Proposition \ref{golproj}} is easy:
The unconditional basis constant of  $X \otimes_\varepsilon \ell_\infty^N $ is $1$ (see e.g. \cite[Lemma 5]{Schutt}), hence the Gordon-Lewis constant
of this space is $\leq 2$ by \eqref{golem}. To get the first inequality we apply the preceding Lemma.  For the second inequality
recall that we have $ d(\otimes_{\varepsilon_s}^{m,s} X,\otimes_{\varepsilon}^{m,s} X ) \leq \frac{m^m}{m!} $
(see \eqref{pol1}). Hence we obtain from \eqref{golemmazur} and \eqref{dsdsds} that
\begin{align*}
{\rm gl} \big( \otimes_{\varepsilon_s}^{m,s} X \big) & \leq
 \dfrac{m^m}{m!} {\rm gl} \big( \otimes_{\varepsilon}^{m,s} X \big)  \\\\[-3ex]& \leq
2\, \dfrac{m^m}{m!} \lambda \big( \otimes_{\varepsilon}^{m-1,s} X \big)   \\& \leq
2 \,\dfrac{m^m}{m!} \dfrac{(m-1)^{m-1}}{(m-1)!}\lambda \big( \otimes_{\varepsilon_s}^{m-1,s} X \big)  \\ &\leq
2 \,\big(\dfrac{m^m}{m!}\big)^2\lambda \big( \otimes_{\varepsilon_s}^{m-1,s} X \big)\,. \,\,\,\,\,\Box
\end{align*}

\vspace{3mm}

We remark that we already here get an alternative proof of Theorem \ref{mainII} in the case  $p \geq 2$
(recall hat this case was
already proved on the basis of Theorem\ref{mainresult}):
By the propositions \ref{Gordon-Lewis-Equi} and \ref{golproj} as well as
\eqref{dsds} and \eqref{dimension} we have that
\[
\begin{array}{l}
\chi \big( (S(x_j)\big)_{j\in J(m,n)} ; \otimes_{\varepsilon_s}^{m,s}\, X\big)
\leq C^m \lambda ( \otimes_{\varepsilon_s}^{m-1,s}\, X)  \leq C^m (1+\frac{n}{m-1})^{\frac{m-1}{2}}\,.
\end{array}
\]
Hence after identifying $\mathcal{P}(^m \ell_p^n)= \otimes^{m,s}_{\varepsilon_s} \ell_q^n$ with $1/p+1/q =1$,
we conclude
\begin{equation*} \label{klar}
\chi_{\mon} (\mathcal{P}(^m \ell_p^n) \leq C^m \lambda (\mathcal{P}(^{m-1} \ell_p^n) \leq C^m
(1+\frac{n}{m-1})^{\frac{m-1}{2}}\,,
\end{equation*}
the statement of Theorem \ref{mainII} in the case  $p \geq 2$\,.
But for $p \leq 2$ this estimate has to be improved,
and we established this in the two final results of this section.
\begin{Lem}
For a given Banach space  $ X := (\C^n,\|\cdot\|) $ define for each $|\alpha| = m$
$$ d_\alpha := \sup \{|a_\alpha| :  \sup_{z \in B_{X}} | \sum_{|\beta|=m} a_\beta z^\beta |  \leq 1 \}\,. $$
Then $$\lambda( \mathcal{P}(^m X)) \leq \sup_{\| z \|_{X} \leq 1} \sum_{|\alpha|=m} d_\alpha |z^\alpha | =: p(X).$$
\end{Lem}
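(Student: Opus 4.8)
The goal is to bound the projection constant $\lambda(\mathcal{P}(^mX))$ by the quantity $p(X) = \sup_{\|z\|_X \le 1}\sum_{|\alpha|=m} d_\alpha|z^\alpha|$. The natural strategy is to exhibit an explicit \emph{projection} onto $\mathcal{P}(^mX)$ from a space into which it embeds isometrically, and to estimate its norm by $p(X)$. The canonical isometric embedding to use is $\mathcal{P}(^mX) \hookrightarrow C(K)$ (equivalently into $\ell_\infty^N$ after discretization), where $K = B_{X^*}$ or, more concretely, $K = \overline{B_X}$ viewed through point evaluations: a polynomial $q = \sum_{|\alpha|=m} a_\alpha z^\alpha$ is sent to the function $z \mapsto q(z)$ on $\overline{B_X}$. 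Recall (as in the definition of $\lambda$) it suffices to produce, for an arbitrary isometric embedding $\mathcal{P}(^mX) \hookrightarrow Z$, a projection of norm $\le p(X)$; by a standard argument it is enough to do this for $Z = \ell_\infty^N$ (or $C(K)$), since any isometric superspace can be locally reduced to this case via the $\lambda(Y,\ell_\infty^N)$ characterization and \eqref{dsdsds}.

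The key idea is that each coefficient functional is bounded: by definition of $d_\alpha$, the linear functional $q \mapsto a_\alpha$ on $\mathcal{P}(^mX)$ has norm exactly $d_\alpha$, so by Hahn--Banach it extends to a functional $\varphi_\alpha$ on $C(K)$ of the same norm $d_\alpha$, represented by a (complex) measure $\nu_\alpha$ with $\|\nu_\alpha\| = d_\alpha$. Now define
\[
P : C(K) \longrightarrow \mathcal{P}(^mX), \qquad
Pf \;=\; \sum_{|\alpha|=m} \Big(\int_K f \, d\nu_\alpha\Big)\, z^\alpha\,.
\]
Then $P$ is linear, its range lies in $\mathcal{P}(^mX)$, and for $q \in \mathcal{P}(^mX)$ one has $\int_K q\, d\nu_\alpha = \varphi_\alpha(q) = a_\alpha$, so $Pq = q$; thus $P$ is indeed a projection onto $\mathcal{P}(^mX)$.

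It remains to estimate $\|P\|$. For $f \in C(K)$ with $\|f\|_\infty \le 1$, the polynomial $Pf$ has coefficients $b_\alpha := \int_K f\,d\nu_\alpha$ with $|b_\alpha| \le \|\nu_\alpha\| = d_\alpha$. Hence for any $z \in \overline{B_X}$,
\[
|(Pf)(z)| \;=\; \Big|\sum_{|\alpha|=m} b_\alpha z^\alpha\Big|
\;\le\; \sum_{|\alpha|=m} |b_\alpha|\,|z^\alpha|
\;\le\; \sum_{|\alpha|=m} d_\alpha\,|z^\alpha|
\;\le\; p(X)\,,
\]
so $\|Pf\|_{\mathcal{P}(^mX)} \le p(X)$. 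This gives $\|P\| \le p(X)$, and therefore $\lambda(\mathcal{P}(^mX), C(K)) \le p(X)$; passing to the finite-dimensional $\ell_\infty^N$ reduction (or using that $\mathcal{P}(^mX)$ is finite-dimensional so $\lambda(\mathcal{P}(^mX)) = \lambda(\mathcal{P}(^mX),\ell_\infty^N)$ for large $N$ up to $\varepsilon$) yields $\lambda(\mathcal{P}(^mX)) \le p(X)$ as claimed.

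The only genuinely delicate point is the bookkeeping in the reduction from an arbitrary isometric superspace $Z$ to $\ell_\infty^N$: one must be slightly careful that the Hahn--Banach extensions $\varphi_\alpha$ and the resulting projection produce a uniform bound independent of $Z$ and $N$, but this is exactly the content of the standard fact that $\lambda(Y) = \sup_N \lambda(Y,\ell_\infty^N)$ for finite-dimensional $Y$, so no new difficulty arises. Everything else is a direct Hahn--Banach-plus-triangle-inequality computation.
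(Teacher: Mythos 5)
Your proof is correct and follows essentially the same route as the paper: regard $\mathcal{P}(^mX)$ as a subspace of a $C(K)$-type space (the paper uses $\ell_\infty(B_X)$), note that each coefficient functional has norm $d_\alpha$, extend by Hahn–Banach, assemble these extensions into a projection $f\mapsto\sum_{|\alpha|=m}K_\alpha(f)z^\alpha$, and bound its norm by $p(X)$ via the triangle inequality. The extra paragraph you devote to reducing an arbitrary isometric superspace $Z$ to the injective $C(K)$/$\ell_\infty^N$ case is the one point the paper leaves implicit, but it is the standard fact that for a subspace $Y$ of an injective space $W$ one has $\lambda(Y)=\lambda(Y,W)$, so both proofs are in substance identical.
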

\begin{proof}
Consider $ \mathcal{P}(^m X) $ as a subspace of  $ \ell_\infty (B_X) $. We construct a projector $ P : \ell_\infty (B_X) \to \mathcal{P}(^m X) $
with norm $ \leq p(X) $.
We use that the functionals  $ k_\alpha : \mathcal{P}(^m X) \to \C, ~ \sum_{|\beta|=m} a_\beta z^\beta \mapsto a_\alpha $ have norm $ d_\alpha $. With the Hahn-Banach theorem we extend them to  $ K_\alpha : \ell_\infty(B_X) \to \C $
with the same norm. \\
Let now $$ P  : \ell_\infty (B_X) \to \mathcal{P}(^m X), ~ f \mapsto \sum_{|\alpha|=m} K_\alpha(f) z^\alpha.$$
Then  $ P $ is a projector on  $ \mathcal{P}(^m X) $ and we have
\begin{eqnarray*} \| P(f) \|_\infty = \| P(f) \|_{P(^m X)} & = & \sup_{\| z \|_{X} \leq 1} | \sum_{|\alpha|=m} K_\alpha(f) z^\alpha | \\
& \leq &  \sup_{\| z \|_{X} \leq 1}  \sum_{|\alpha|=m} |K_\alpha(f)|| z^\alpha | \\
&\leq & \sup_{\| z \|_{X} \leq 1}  \sum_{|\alpha|=m} d_\alpha \|f \|_\infty | z^\alpha | \leq
\|f \|_\infty ~ p(X).
\end{eqnarray*}
\end{proof}

We now follow the proof of \cite[Lemma 3.3]{DeFr} in order to get the needed estimate for the projection constant of
$\mathcal{P}(^m \ell_p^n)$.

\begin{Prop} \label{proproj}
There is a $ C \geq 1 $ such that for all $ 1 \leq p \leq \infty$, all $ n, m \in \mathbb{N} $ we have
$$\lambda(\mathcal{P}(^m \ell_p^n))
\leq C^m (1 + \frac{n}{m})^{m( 1-\frac{1}{\min \{p,2\} })}\,. $$
\end{Prop}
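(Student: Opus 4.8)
The plan is to treat $2\le p\le\infty$ and $1\le p\le 2$ separately; only the second range requires the preceding lemma. For $2\le p\le\infty$ there is nothing to prove beyond \eqref{dsds} and \eqref{dimension}: since $\dim\mathcal{P}(^m\ell_p^n)=\binom{n+m-1}{n-1}$, we get
\[
\lambda(\mathcal{P}(^m\ell_p^n))\le\sqrt{\dim\mathcal{P}(^m\ell_p^n)}\le C^m\Big(1+\tfrac{n}{m}\Big)^{m/2},
\]
and $m/2=m\big(1-\tfrac{1}{\min\{p,2\}}\big)$ when $p\ge2$. So from now on I fix $1\le p\le 2$, where the target exponent is $m(1-1/p)$.

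By the preceding lemma (with $X=\ell_p^n$) it suffices to estimate $\lambda(\mathcal{P}(^m\ell_p^n))\le p(\ell_p^n)=\sup_{\|z\|_p\le1}\sum_{|\alpha|=m}d_\alpha|z^\alpha|$, so the work is to bound the numbers $d_\alpha$ and then the sum. For the coefficients I would use a Cauchy integral estimate over the distinguished tori sitting inside the ball: if $P=\sum_\beta a_\beta z^\beta$ satisfies $\sup_{z\in B_{\ell_p^n}}|P(z)|\le1$ and $r_1,\dots,r_n\ge0$ with $r_1^p+\dots+r_n^p\le1$, then reading off the $\alpha$-th Fourier coefficient of the trigonometric polynomial $\zeta\mapsto P(r_1\zeta_1,\dots,r_n\zeta_n)$ on $\Tt^n$ (whose points lie in $\overline{B_{\ell_p^n}}$, where $|P|\le1$ by continuity) gives $|a_\alpha|\,r_1^{\alpha_1}\cdots r_n^{\alpha_n}\le1$; optimizing over $r$ by weighted AM--GM --- the maximum of $\prod_k r_k^{\alpha_k}$ under $\sum_k r_k^p\le1$ being attained at $r_k^p=\alpha_k/m$ --- this yields
\[
d_\alpha\;\le\;\frac{m^{m/p}}{\alpha^{\alpha/p}},\qquad\text{where } \alpha^\alpha:=\prod_k\alpha_k^{\alpha_k}\ \ (0^0:=1).
\]

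To bound the sum, substitute $w_k:=|z_k|^p$, so that $\sum_k w_k\le1$ and $|z^\alpha|^p=w^\alpha:=\prod_k w_k^{\alpha_k}$, and apply H\"older's inequality with exponents $p,p'$ over the $\binom{n+m-1}{n-1}$ multi-indices of length $m$:
\[
\sum_{|\alpha|=m}d_\alpha|z^\alpha|\;\le\;m^{m/p}\sum_{|\alpha|=m}\Big(\frac{w^\alpha}{\alpha^\alpha}\Big)^{1/p}\;\le\;m^{m/p}\binom{n+m-1}{n-1}^{1-1/p}\Big(\sum_{|\alpha|=m}\frac{w^\alpha}{\alpha^\alpha}\Big)^{1/p}.
\]
Now $\alpha^\alpha\ge\alpha!$ (each factor satisfies $\alpha_k^{\alpha_k}\ge\alpha_k!$), so the multinomial theorem gives $\sum_{|\alpha|=m}\frac{w^\alpha}{\alpha^\alpha}\le\frac{1}{m!}\sum_{|\alpha|=m}\frac{m!}{\alpha!}w^\alpha=\frac{(\sum_k w_k)^m}{m!}\le\frac{1}{m!}$, whence the displayed quantity is at most $\binom{n+m-1}{n-1}^{1-1/p}(m^m/m!)^{1/p}\le e^m\binom{n+m-1}{n-1}^{1-1/p}$. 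By \eqref{dimension} this is $\le C^m(1+n/m)^{m(1-1/p)}$, and since $1-1/p=1-1/\min\{p,2\}$ in this range, the proof is complete.

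The computation of $d_\alpha$ is the conceptual heart; everything after it is bookkeeping with H\"older and Stirling. The one genuinely delicate point is that the factor $m^{m/p}$ from the coefficient bound must be cancelled exactly by the hidden $m^{-m/p}$ inside $\big(\sum_{|\alpha|=m}w^\alpha/\alpha^\alpha\big)^{1/p}$: one has to keep the $1/m!$ (i.e.\ the $m^{-m}$ saving coming from $\alpha^\alpha\ge\alpha!$ together with $(\sum_k w_k)^m\le1$) and not replace that inner sum by a crude absolute constant, which would leave an uncancelled $m^{m/p}$ and break the hypercontractive (i.e.\ $C^m$) shape of the estimate. This mirrors the argument of \cite[Lemma 3.3]{DeFr}.
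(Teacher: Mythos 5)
Your proof is correct and follows essentially the same route as the paper: split at $p=2$, use $\lambda(X)\le\sqrt{\dim X}$ for $p\ge2$, and for $p\le2$ combine the preceding projection lemma with a coefficient bound for $d_\alpha$, H\"older with exponents $p,p'$, and the multinomial theorem. The only difference is cosmetic: you re-derive the coefficient bound from scratch (Cauchy estimate on the distinguished torus plus weighted AM--GM, giving $d_\alpha\le m^{m/p}/\alpha^{\alpha/p}$, then $\alpha^\alpha\ge\alpha!$), whereas the paper imports the equivalent bound $d_\alpha\le e^{m/p}(m!/\alpha!)^{1/p}$ from \cite[Lemma 3.3]{DeFr}; your version is self-contained but the computation that follows is identical.
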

\begin{proof}
The case $p \geq 2$ was already  proved, see \eqref{klar} or the remark after \eqref{Sidon}. For $ 1 \leq p \leq 2$ we apply the preceding Lemma to $ X = \ell_p^n $.
>From the proof of \cite[Lemma 3.3]{DeFr} we know that $ d_\alpha \leq    e^{m/p}\Big(\frac{m!}{\alpha!}\Big)^{1/p}\,,  $
and hence by H\"older's inequality (with $\frac{1}{p} + \frac{1}{q} = 1 $) and \eqref{dimension} for $z\in \C^n$
\begin{align*}
   \sum_{|\alpha|=m} d_\alpha |z^\alpha|
    & \le\; e^{\frac{m}{p}} \sum_{|\alpha|=m}
    \Big(\frac{m!}{\alpha!}\Big)^{\frac{1}{p}}
         |z^\alpha|\\[1ex]
    & \le\; e^{\frac{m}{p}} \Big(\sum_{|\alpha|=m} 1\Big)^{\frac{1}{q}}
         \Big(\sum_{|\alpha|=m} \; \frac{m!}{\alpha!}\;
         (|z_1|^p,\ldots,|z_n|^p)^\alpha\Big)^{\frac{1}{p}}\\[1ex]
    & \le \; e^{\frac{m}{p}} \; c^{\frac{m}{q}} \Big(
         1+\frac{n}{m} \Big)^{\frac{m}{q}} \Big(\sum^n_{k=1}|z_k|^p
         \Big)^{\frac{m}{p}}\\[1ex]
    & =\; e^{\frac{m}{p}} \; c^{\frac{m}{q}} \Big( 1+ \frac{n}{m}
         \Big)^{\frac{m}{q}} \|z\|^m_{\ell^n_p}\, .
\end{align*}
Thus
\[
   \lambda (\mathcal{P}(^m\ell^n_p)) \;
   \le \; e^{\frac{m}{p}} c^{\frac{m}{q}} \Big(1 + \frac{n}{m}\Big)^{\frac{m}{q}}.
\]
\end{proof}

\section{Proofs of Theorem \ref{mainII} and Theorem \ref{mainIII}}
All we have to do is to collect the results already shown in the preceding sections.

\vspace{2mm}

\noindent {\bf (1) Proof of Theorem \ref{mainII}}:
Fix $m,n$ and $1 \leq p \leq \infty$.
We again identify $\mathcal{P}(^m\ell_p^n) =\otimes_{\varepsilon_s}^{m,s}\, \ell_q^n\,$, where $1/p+ 1/p = 1$\,.
>From Proposition \ref{Gordon-Lewis-Equi} we know that
$$
\chi_{\mon}(\mathcal{P}^m(\ell_p^n))  \big)
\leq C^m {\rm gl} \, (\mathcal{P}^m(\ell_p^n))\,,
$$
hence we conclude from Proposition \ref{golproj} that
$$
\chi_{\mon}(\mathcal{P}^m(\ell_p^n)) \leq C^m  \lambda \big(  \mathcal{P}^{m-1}(\ell_p^n ))\,,
$$
and then finally by Proposition \ref{proproj}
$$
\chi_{\mon}(\mathcal{P}^m(\ell_p^n)) \leq C^m (1 + \frac{n}{m})^{(m-1)( 1-\frac{1}{\min \{p,2\} })} \,;
$$
(here the absolute constant $C$ is of course changing  step by step). This gives Theorem \ref{mainII}.$\,\,\Box$

\vspace{2mm}

\noindent {\bf (2) Proof of Theorem \ref{mainIII}}:
 Fix some Banach space
$\ell^n_p$. From \eqref{eq2.1}
we know that
\begin{equation*}
  \frac{1}{3\sup_m \chi_{\mon}(\mathcal{P}(^m\ell_n^p))^{\frac{1}{m}}}\,
  \le K(B_{\ell_n^p})\,,
  \end{equation*}
hence by Theorem \ref{mainII} there is some absolute constant $C \geq 1$ such that  $$\chi_{\mon}(\mathcal{P}^m(\ell_p^n))^{\frac{1}{m}} \leq C \,\,\, \text{ whenever} \,\,\,m \geq n$$
and
$$\chi_{\mon}(\mathcal{P}^m(\ell_p^n))^{\frac{1}{m}}
\leq C (\frac{n}{m})^{\frac{m-1}{m}( 1-\frac{1}{\min \{p,2\} })}  \,\,\, \text{ whenever} \,\,\, n > m\,.$$
Minimizing $m n^{1/m}$ for $n > m$ then proves Theorem \ref{mainIII}.$\,\,\Box$

\vspace{3mm}

We finished with an improved definte version of \cite[Remark 1]{DeFr}  which  in the context of unconditionality quantifies the
``gap"
between symmetric and full injective tensor products of
$\ell_p^n$'s.

\begin{remark}
There is a constant $C> 0$ such that the following estimates hold for
each $1 \leq p \leq \infty$ and $n$:
\begin{itemize}
\item[(1)]
$\frac{1}{C}  \bigg( \frac{n}{\log n} \bigg) ^{\frac{1}{\max (p,2)}}
\leq
\sup\limits_m  \chi_{\mon}(\otimes^{m,s}_{\varepsilon_s} \ell^n_p)^{\frac{1}{m}}
\leq C \bigg( \frac{n}{\log n} \bigg) ^{\frac{1}{\max(p,2)}}$

\vspace{1mm}

\item[(2)]
$\frac{1}{C}\, n ^{\frac{1}{\max(p,2)}}
\, \leq
\, \sup\limits_m  \chi_{\mon}(\otimes^m_{\varepsilon}
\ell^n_p)^{\frac{1}{m}}\,
\leq \, C \,n ^{\frac{1}{\max(p,2)}}\,.$
\end{itemize}
\end{remark}


\begin{thebibliography}{99}


\bibitem{Aiz} L. Aizenberg, Multidimensional analogues of a Bohr's theorem on power series, {\it Proc.
Amer. Math. Soc.} {\bf 128} (2000), no. 4, 1147--1155.

\bibitem{Ba} F. Bayart, Op{\'e}rateurs de composition sur des espaces de s{\'e}ries de Dirichlet et pr{\`o}blemes dhypercyclicit{\'e} simultan{\'e}e,
doctoral thesis (2002).

\bibitem{Bl} R.C. Blei,  Fractional Cartesian products of sets.  {it Ann. Inst. Fourier (Grenoble)}  {\bf 29}  (1979), no. 2,  79-105.

\bibitem{Boas} H.P. Boas, Majorant series, {\it J. Korean Math.
Soc.} {\bf 37} (2000), no. 2, 321--337.

\bibitem{BoaKha} H.P. Boas and D. Khavinson, Bohr's power series theorem in several variables, {\it
Proc. Amer. Math. Soc.} {\bf 125}, 10 (1997), 2975-2979.

\bibitem{Bohnenblust-Hille} H.F. Bohnenblust, E. Hille, On the absolute
convergence of Dirichlet series, {\it Ann of Math.} (2) {\bf 32} (1934), 600-622.

\bibitem{Bo} A. Bonami, Etude des coefficients de Fourier des fonctions de $L\sp{p}(G)$.  {\it Ann. Inst. Fourier (Grenoble)}  {\bf 20}  1970 fasc. 2 (1971), 335-402.

\bibitem{Bohr1} H. Bohr, Ueber die bedeutung der Potenzrreihen unendlich vieler Variablen
in der Theorie der Dirichletreihen $\sum \frac{a_n}{n^s}$, {\it
Nachrichten von der Koeniglichen Gesellschaft der Wissenschaften zu
Goettingen} (1913), 441-488.

\bibitem{Bohr} H. Bohr, A theorem concerning power series, {\it Proc. London
Math. Soc.} (2) {\bf 13} (1914), 1-5.

\bibitem{Da73}
A.~M. Davie.
\newblock Quotient algebras of uniform algebras.
\newblock {\em J. London Math. Soc. (2)}, {\bf 7} (1973) 31--40.


\bibitem{DeDiGaMa}  A. Defant, J.C. D\'{\i}az, D. Garc\'{\i}a, M. Maestre,
Unconditional basis and Gordon-Lewis constants for spaces of polynomials. {\it Journal of Funct.
Anal.} {\bf 181} (2001), 119-145.

\bibitem{DefFlo} A. Defant, K. Floret, ``Tensor  Norms and Operator
Ideals".  North--Holland  Math. Studies, {\bf 176}, 1993.

\bibitem{DeFr} A. Defant, L.  Frerick, A logarithmic lower bound for multi-dimensional
Bohr radii.  {\it Israel J. Math.}  {\bf 152}  (2006), 17-28.


\bibitem{DeGarMa_BorhLoc} A. Defant, D. Garc{\'\i}a, M. Maestre, Bohr's power series theorem and
local Banach space theory, {\it J. reine angew. Math.} {\bf 557} (2003), 173--197.


\bibitem{DeSe}
A. Defant, P. Sevilla-Peris,
A new multilinear insight on Littlewood's $4/3$-inequality; to appear in {\it J.\ Func.\ Anal.} 2009




\bibitem{DiJaTo95}
J.~Diestel, H.~Jarchow, A.~Tonge, Absolutely summing operators, vol.~43 of
  Cambridge Studies in Advanced Mathematics, Cambridge University Press,
  Cambridge, 1995.



\bibitem{Dineen} S. Dineen, ``Complex Analysis on Infinite
Dimensional Banach Spaces".  Springer-Verlag. Springer Monographs in
Mathematics, Springer-Verlag, London,  1999.

\bibitem{DinTim1} S. Dineen, R.M. Timoney, Absolute bases, tensor products and a theorem of Bohr,
{\it Studia Math.} {\bf 94} (1989), 227-234.

\bibitem{DinTim2} S. Dineen and R.M. Timoney, On  a problem of H. Bohr,
{\it Bull. Soc. Roy. Sci. Li\`{e}ge} {\bf 60}, 6 (1991), 401-404.

\bibitem{Floret} K. Floret,  Natural norms on symmetric
tensor products of normed spaces. {\it Note di Mat.} {\bf 17} (1997), 153-188.

\bibitem{GOLE}
Y. ~Gordon, D.R. ~Lewis
\newblock Absolutely summing operators and local unconditional structures.
\newblock {\it Acta Math.}, {\bf 133} (1974), 27-47.


\bibitem{Ha75}
L.~A. Harris, Bounds on the derivatives of holomorphic functions of vectors,
  in: Analyse fonctionnelle et applications (Comptes Rendus Colloq. Analyse,
  Inst. Mat., Univ. Federal Rio de Janeiro, Rio de Janeiro, 1972), Hermann,
  Paris, 1975, pp. 145--163. Actualit\'es Aci. Indust., No. 1367.

\bibitem{Ka78}
S.~Kaijser,
\newblock Some results in the metric theory of tensor products.
\newblock {\em Studia Math.}, {\bf 63(2)} (1978), 157--170.


\bibitem{LinTza} J. Lindenstrauss and L. Tzafriri, ``Classical
Banach spaces I and II". Springer-Verlag, 1977, 1979.

\bibitem{OrOuSe}
J.~Ortega-Cerd{\`a}, M.~Ouna\"ies, K.~Seip,
\newblock The Sidon constant for homgeneous polynomials.
\newblock {\em preprint} March 2009.





\bibitem{PI78} G. Pisier, Some results on Banach spaces without local
unconditional structure. {\it Compositio Math.} {\bf 37} (1978), 3-19.

\bibitem{Queffelec}
H.~Queff{\'e}lec.
\newblock H. {B}ohr's vision of ordinary {D}irichlet series; old and new
  results.
\newblock {\em J. Anal.}, {\bf 3} (1995), 43--60.

\bibitem{Schutt} C. Sch\"utt,  Unconditionality in
tensor products. {\it Israel J. Math} {\bf 31} (1978), 209--216.


\bibitem{Toeplitz} O. Toeplitz, Ueber eine bei Dirichletreihen auftretende Aufgabe aus
der Theorie der Potenzreihen unendlich vieler Veraenderlichen. {\it
Nachrichten von der Koeniglichen Gesellschaft der Wissenschaften zu
Goettingen} (1913), 417-432.

\bibitem{Tom} N. Tomczak--Jaegermann, ``Banach--Mazur Distances and
Finite--Dimen\-sion\-al Operators Ideals". Longman Scientific \& Technical, 1989.

\end{thebibliography}
\end{document}